\theoremstyle{plain}
\newtheorem{thm}{Theorem}[section]
\newtheorem{lem}{Lemma}[section]
\newtheorem{prop}{Proposition}[section]
\theoremstyle{definition}
\theoremstyle{remark}
\newtheorem{rem}{Remark}[section]
\numberwithin{equation}{section}
\newcommand{\R}{\mathbb{R}}
\newcommand{\T}{\mathbb{T}}
\newcommand{\h}{\mathbb{H}}
\newcommand{\cl}{\mathcal{L}}
\newcommand{\ep}{\epsilon}
\begin{document}

\title[Magnetic effect on  solvability of MHD boundary layer equations]
 {Magnetic effects on the solvability of 2D MHD boundary layer equations without resistivity in Sobolev spaces}

 \author[Cheng-Jie Liu]{Cheng-Jie Liu}
\address{Cheng-Jie Liu
\newline\indent
School of Mathematical Sciences, LSC-MOE and Institute of Natural Sciences,
 Shanghai Jiao Tong University,
Shanghai 200240, P. R. China.}
\email{liuchengjie@sjtu.edu.cn}

\author[Dehua Wang]{Dehua Wang}
\address{Dehua Wang
\newline\indent
\small Department of Mathematics,
University of Pittsburgh, Pittsburgh, PA 15260, USA.}
\email{dwang@math.pitt.edu}

\author[Feng Xie]{Feng Xie}
\address{Feng Xie
\newline\indent
School of Mathematical Sciences, and LSC-MOE,
 Shanghai Jiao Tong University,
Shanghai 200240, P. R. China.}
\email{tzxief@sjtu.edu.cn}

\author[Tong Yang]{Tong Yang}
\address{Tong Yang
\newline\indent
Department of Mathematics,
City University of Hong Kong,
Tat Chee Avenue, Kowloon, Hong Kong; and
School of Mathematical Sciences, Shanghai Jiao Tong University,
Shanghai 200240, P. R. China.}
\email{matyang@cityu.edu.hk}

\begin{abstract}
In this paper, we are concerned with the magnetic effect on  the Sobolev solvability of boundary layer equations for the 2D incompressible MHD system without resistivity.
The MHD boundary layer is described by the Prandtl type equations derived from the incompressible viscous
MHD system without resistivity  under the no-slip boundary condition on the velocity. Assuming that the initial tangential magnetic field does not degenerate, a
local-in-time  well-posedness  in Sobolev spaces is proved without the monotonicity condition on the velocity field. Moreover, we show that if the tangential magnetic field shear layer is degenerate at one point, then the linearized MHD boundary layer system  around the shear layer profile is ill-posed in the Sobolev settings provided that the initial velocity shear flow is non-degenerately critical at the same point.
\end{abstract}

\date{\today}
\keywords{MHD boundary layer, well-posedness, ill-posedness, Sobolev spaces}

\subjclass[2000]{76N20, 35A07, 35G31,35M33}

\maketitle


\section{Introduction and Main Result} \label{S1}
In this paper, we consider the initial-boundary value problem for the following two-dimensional (2D) magnetohydrodynamic (MHD) boundary layer equations in the domain $\{(t,x,y): t\in[0,T],x\in\T,y\in\R_+\}$:
\begin{align}
\label{BLE}
\left\{
\begin{array}{ll}
\partial_tu+u\partial_xu+v\partial_{y}u+\partial_x p-\partial_{y}^2u-b_1\partial_xb_1-b_2\partial_{y}b_1=0,\\
\partial_t b_1+u\partial_x b_1+v\partial_{y}b_1-b_1\partial_xu-b_2\partial_{y}u=0,\\
\partial_xu+\partial_{y}v=0,\quad \partial_xb_1+\partial_{y}b_2=0,\\
(u,v,b_2)|_{y=0}=0,\qquad \lim\limits_{y\to+\infty}(u,b_1)(t,x,y)=(U,B)(t,x),\\
(u,b_1)|_{t=0}=(u_0,b_0)(x,y),
\end{array}
\right.
\end{align}
where $\T$ {stands for a torus or a periodic domain, $\R_+=[0, +\infty)$}, $(u,v)$ and  $(b_1, b_2)$ are the velocity and magnetic boundary layer functions respectively, and the known functions $U, B$ and $p$ satisfy the Bernoulli law:
\begin{align}\label{BN1}
\begin{cases}
&\partial_t U+U\partial_x U+\partial_x p=B\partial_x B,\\
&\partial_t B+U\partial_x B=B\partial_x U.
\end{cases}\end{align}
See the Appendix \label{A1} for the derivation of the system \eqref{BLE}.

Before stating the main results in this paper, we first review some related works on the Prandtl boundary layer theories. In fact, without  the magnetic field $(b_1, b_2)$ in \eqref{BLE},  the system is  the classical Prandtl equations that was  firstly derived by L. Prandtl \cite{P} in 1904 to understand the structure of incompressible fluid with high Reynolds number and physical boundaries.  In the 2D case,  {when the initial tangential velocity satisfies the monotonicity assumption, Oleinik \cite{O,O-S} firstly achieved the local-in-time well-posedness of classical solutions
by using the Crocco transformation, and this well-posedness result was recently  reproved by   an energy method  in the framework of
weighted Sobolev spaces in \cite{AWXY} and \cite{M-W1} independently, where the cancellation mechanism in the convection terms is observed and essentially used. Also a global in time weak solution was obtained in \cite{X-Z}   under an additional favorable condition on the pressure.}

 {On the contrary, when the monotonicity condition is violated, separation of the boundary
layer or singularity formation 
is well expected and observed. For example, E-Engquist  in \cite{E-E} proved that the smooth solution to the Prandtl
equations must blowup in a finite time. Very recently, when the background shear layer admits a non-degenerate critical point, the ill-posedness (or instability phenomena) of solutions to both the linearized and nonlinear Prandtl equations was studied, see
\cite{G-D,G-N,G,G-N1, L-Y} and the reference therein. All these results show that the monotonicity condition plays a key role for the well-posedness theory of solutions in the finite regularity functional spaces. However, this is not the case in the frameworks of analytic functions and Gevrey regularity classes. In fact, in the framework of
analytic functions, Sammartino and Caflisch \cite{S-C1,S-C2} not only established
the local well-posedness theory of the Prandtl system, but also proved the validity of Prandtl boundary layer ansatz in this setting by using
the abstract Cauchy-Kowalewskaya theorem.} 
We refer the readers to \cite{ L-C-S, I-V, K-V, K-M-V-W, Mae, N-N, W-W-Z, Z-Z} and the reference therein  for more results in the analytic framework,  and   \cite{D-G, G-M-M, G-M, L-W-X, L-Y} in the Gevery framework. It is noted that the above results   mainly concentrated on the two-dimensional case, and there are only a few results in the  three-dimensional case such as \cite{F-T-Z, L-W-Y, L-W-Y1, L-W-Y2, L-X}.

Motivated by the fifteenth open problem in Oleinik-Samokhin's classical book \cite{O-S} (page 500-503),
{\it ``15. For the equations of the magnetohydrodynamic boundary layer, all problems of the above type are still open,"}
efforts have been made to study the well-posedness of solutions to the MHD boundary layer equations and to justify the MHD boundary layer ansatz in \cite{H-L-Y,L-X-Y1,L-X-Y2, X-Y1, X-Y2}; see also   \cite{G-P}. Precisely, when the hydrodynamic  and magnetic Reynolds numbers have the same order, the well-posedness of solutions to the MHD boundary layer equations and the validity of the Prandtl ansatz were established without any monotone condition imposed on the  velocity in \cite{L-X-Y1,L-X-Y2}. And the long-time existence of solutions to the MHD boundary layer equations in analytic settings for two different physical regimes were also studied in \cite{X-Y1, X-Y2}. When the magnetic Reynolds number is much larger than the hydrodynamic Reynolds number,  the resistivity terms can be ignored in the MHD equations. As a consequence, there is no partial viscous effect in normal variable $y$ for the second equation in (\ref{BLE}).

The purpose of this paper is to study the well-posedness and ill-posedness of \eqref{BLE}.
First, similarly to \cite{L-X-Y1},  we shall establish a well-posedness theory
for the MHD boundary layer equations (\ref{BLE})   in weighted Sobolev spaces, provided that the initial tangential magnetic filed is not degenerate.
 This shows that the tangential magnetic field prevents the formation of  singularity in more general
 flow situation that includes  reverse flow in the velocity field,  {no matter whether there is partial viscous effect in the magnetic boundary layer equation or not}.
 This  result on the well-posedness   can be stated as follows.

\begin{thm}
\label{MTH}
Suppose that the outflow $(U, p, B)(t,x)$ in (\ref{BN1}) 
is smooth, and the initial data and boundary conditions in \eqref{BLE} are smooth, compatible and satisfy
\begin{align}
\label{CC}
b_{0}(x,y)\geq \delta_0
\end{align}
for some positive constant $\delta_0$.
Then there exists a time $T$, such that the initial-boundary value problem \eqref{BLE} 
admits a unique smooth solution $(u, v, b_1, b_2)(t,x, y)$ satisfying
\begin{align}
\label{LB}
b_{1}(t,x,y)\geq \delta_0/2
\end{align}
for all $t\in[0,T], (x, y)\in\mathbb{T}\times\mathbb{R}_+$.
\end{thm}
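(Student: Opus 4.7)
The plan is to construct the solution via energy estimates in weighted Sobolev spaces combined with a standard regularization scheme. As preparation, homogenize the conditions at $y=+\infty$ by setting $\tilde u=u-U$ and $\tilde b_1=b_1-B$. The lower bound $b_1\ge\delta_0/2$ asserted in \eqref{LB} is then propagated on a short time interval by a continuity argument: the second equation of \eqref{BLE} is a transport equation for $b_1$ along the divergence-free field $(u,v)$, so $b_1$ remains close to $b_0$ in $L^\infty$ for small $t$, and a bootstrap built into the fixed-point construction keeps the solution in the regime where \eqref{LB} holds.

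The technical core is an a priori estimate of $(u-U,b_1-B)$ in a weighted Sobolev space $H^m_\ell$ (polynomial weight $\langle y\rangle^\ell$, with $m$ sufficiently large to give an algebra and $L^\infty$-embedding). Writing $D^\alpha:=\partial_t^{\alpha_0}\partial_x^{\alpha_1}$ and applying $D^\alpha$ with $|\alpha|\le m$ to \eqref{BLE}, the $L^2$ pairing against $(D^\alpha u,D^\alpha b_1)$ produces two classes of top-order terms. The first consists of Lorentz-force contributions $-D^\alpha(b_1\partial_xb_1+b_2\partial_yb_1)$ paired with $D^\alpha u$ and $-D^\alpha(b_1\partial_xu+b_2\partial_yu)$ paired with $D^\alpha b_1$; integration by parts, using $\partial_xu+\partial_yv=0$ and $\partial_xb_1+\partial_yb_2=0$, shows that their highest-order parts cancel up to commutators. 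The genuinely dangerous class is $(D^\alpha v)\partial_yu$ and $(D^\alpha b_2)\partial_yb_1$: since
\[
v=-\int_0^y\partial_xu(\cdot,y')\,dy',\qquad b_2=-\int_0^y\partial_xb_1(\cdot,y')\,dy',
\]
these carry one more tangential derivative than the energy controls, and without a monotonicity assumption on $u$ they cannot be closed by a direct integration by parts.

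To overcome this loss of derivative I would exploit the non-degenerate tangential magnetic field, in the spirit of the method developed by Liu--Xie--Yang in \cite{L-X-Y1,L-X-Y2}. The idea is to replace $D^\alpha u$ and $D^\alpha b_1$ by good unknowns of the schematic form
\[
g_\alpha^u=D^\alpha u-\frac{\partial_yu}{b_1}\,D^\alpha\phi,\qquad g_\alpha^b=D^\alpha b_1-\frac{\partial_yb_1}{b_1}\,D^\alpha\phi,
\]
where $\phi$ is a magnetic stream function ($\partial_y\phi=b_1$, $\partial_x\phi=-b_2$), the bound \eqref{LB} keeping both coefficients $\partial_yu/b_1$ and $\partial_yb_1/b_1$ bounded in the weighted norms. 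A direct (but delicate) computation shows that the equations governing $g_\alpha^u$ and $g_\alpha^b$ no longer feature the top-order terms $(D^\alpha v)\partial_yu$ and $(D^\alpha b_2)\partial_yb_1$; they are traded for commutators controlled by the $H^m_\ell$ norm of $(u,b_1)$. Because there is no resistivity $\partial_y^2b_1$ to generate smoothing on the magnetic field, the cancellation must be truly exact rather than absorbable by parabolic dissipation; arranging this is the main obstacle of the proof.

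Once closed estimates for $(g_\alpha^u,g_\alpha^b)$ are obtained by standard Moser- and Hardy-type inequalities, bounds on $(D^\alpha u,D^\alpha b_1)$ are recovered by inverting the defining relations and integrating in $y$. Local existence then follows by a vanishing-regularization argument, for instance by adding artificial resistivity $\ep\partial_y^2b_1$ to the induction equation, applying the theory of \cite{L-X-Y1} to produce solutions uniform in $\ep$, and passing to the limit using the $\ep$-independent estimate. Uniqueness is proved by the same cancellation-based energy estimate applied at one order lower to the difference of two solutions.
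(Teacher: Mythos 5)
Your proposal correctly identifies the obstruction (the loss of an $x$-derivative through $v=-\int_0^y\partial_x u$ and $b_2=-\int_0^y\partial_x b_1$) and the stabilizing role of $b_1\geq\delta_0/2$, but the mechanism you propose to close the estimate has a genuine gap exactly at the point you yourself flag as ``the main obstacle.'' The good-unknown substitution $g^u_\alpha=D^\alpha u-\frac{\partial_y u}{b_1}D^\alpha\phi$, $g^b_\alpha=D^\alpha b_1-\frac{\partial_y b_1}{b_1}D^\alpha\phi$ of \cite{L-X-Y1} produces, in the equation for $g^u_\alpha$ and especially for $g^b_\alpha$, residual terms in which $\partial_y$ and $\partial_y^2$ fall on the quotients $\partial_y u/b_1$ and $\partial_y b_1/b_1$; in \cite{L-X-Y1} these are absorbed because the resistivity $\kappa\partial_y^2 b_1$ supplies both a dissipative term $\|\partial_y g^b_\alpha\|^2$ and parabolic gain of normal regularity for $b_1$. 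Those estimates are \emph{not} uniform as $\kappa\to0$, so the final step of your plan --- add $\ep\partial_y^2 b_1$, ``apply the theory of \cite{L-X-Y1} to produce solutions uniform in $\ep$,'' and pass to the limit --- cannot simply cite that paper: the $\ep$-independent bound is precisely what is missing, and without resistivity $b_1$ solves a pure transport equation whose normal regularity must be propagated by hand rather than recovered by smoothing. As written, the proposal asserts that ``a direct (but delicate) computation shows'' the dangerous terms cancel exactly, but does not exhibit the cancellation in the resistivity-free setting, and this is where the proof actually lives.

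For comparison, the paper avoids the good-unknown machinery altogether. It uses the magnetic stream function not as a corrector but as a new normal coordinate: setting $\bar y=\psi(t,x,y)$ with $\partial_y\psi=b_1$, $-\partial_x\psi=b_2$ (well defined and monotone in $y$ precisely because of \eqref{CC}), the system \eqref{BLE} transforms into \eqref{BLEE}, in which $v$ and $b_2$ no longer appear, so the derivative loss disappears at the level of the equations rather than being cancelled in the energy identity. The reduced system is a quasilinear symmetrizable system with degenerate normal diffusion $b_1\partial_{\bar y}(b_1\partial_{\bar y}u)$ (a Chaplygin-gas--type model), which is solved by a Picard iteration in anisotropic Sobolev spaces, with the normal derivatives of $b_1$ recovered inductively from the equations since there is no smoothing in $\bar y$ for the magnetic component. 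If you wish to salvage your route, you would need to carry out the exact cancellation for the resistivity-free good unknowns and the induction on normal regularity of $b_1$ explicitly; the coordinate change is the cleaner path.
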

We remark that  the precise smoothness condition and the compatibility condition in the above Theorem \ref{MTH} will be given later for the concise  presentation of the theorem.
Theorem \ref{MTH} shows that the non-degenerate tangential magnetic field has stabilizing effect on boundary layers. On the other hand, when the magnetic field is absent, the Prandtl equations exhibit instability mechanism in the framework of Sobolev spaces without the monotonicity condition \cite{G-D,G-N,G-N1, L-Y}. Then a natural question arises as whether such a non-degeneracy condition on the tangential magnetic field is necessary for the well-posedness of the system  \eqref{BLE}. Our second result
in this paper aims to answer this question.

For illustration and without loss of generality, we consider \eqref{BLE} with the constant outflow $(U,B)$. That is,
\[
(U,B)(t,x)~\equiv~(U_0,B_0)\quad\mbox{so that},\quad \partial_x p(t,x)~\equiv~0.
\]
In this case,  the equations of \eqref{BLE} admit a  shear flow solution of the form
$$\big(u,v,b_1,b_2\big)(t,x,y)~\equiv~\big(u_s(t,y),0,b_s(y),0\big),$$
where the function $u_s(t,y)$ is a smooth solution to the following heat equation:
\begin{equation}\label{heat}\begin{cases}
\partial_t u_s-\partial_y^2u_s=0,\qquad {\mbox {in}}\quad\Omega,\\
u_s|_{y=0}=0,\quad\lim\limits_{y\to+\infty} u_s=U_0,\\
u_s|_{t=0}=U_s(y),
\end{cases}\end{equation}
with an initial shear layer $U_s(y)$.
Consider the linearization of
the problem \eqref{BLE} around the shear flow $\big(u_s(t,y),0, b_s(y), 0\big)$ in the domain $\{(t,x,y): t\in[0,T],x\in\T,y\in\R_+\}$, we obtain
\begin{equation}\label{linear_pr}\begin{cases}
\partial_t u+u_s\partial_x u+v\partial_y u_s-\partial_y^2u-b_s\partial_x b_1-b_2b_s'=0,\qquad &\\
\partial_t b_1+u_s\partial_x b_1+v b_s'-b_s\partial_x u-b_2\partial_y u_s=0,\qquad &\\
\partial_x u+\partial_y v=0,\qquad \partial_x b_1+\partial_y b_2=0, \\
(u, v, b_2)|_{y=0}=0,\quad \lim\limits_{y\to\infty} (u,b_1)(t,x,y)=\mathbf0.
\end{cases}\end{equation}

For any $\alpha,s\geq0$, denote
\[\begin{split}
&W_\alpha^{s,\infty}(\mathbb{R}_+)~:=~\{f=f(y), y\in\mathbb{R}_+: ~\|f\|_{W_\alpha^{s,\infty}}= 
\|e^{\alpha y}f(y)\|_{W^{s,\infty}(\mathbb{R}_+)}<\infty\}
\end{split}\]
 and
\begin{align*}
E_{\alpha,\beta}=\big\{f=f(x,y)=\sum_{k\in\mathbb{Z}}e^{ikx}f_k(y): \ \|f_k\|_{W_{\alpha}^{0,\infty}}\leq C_{\alpha,\beta}e^{-\beta|k|},\ \forall k\in\mathbb{Z}\big\}
\end{align*}
with
\begin{align*}
\|f\|_{E_{\alpha,\beta}}:=\sup_{k\in\mathbb{Z}}e^{\beta|k|}\|f_k\|_{W_{\alpha}^{0,\infty}(\R_+)}.
\end{align*}
It is noted that functions in $E_{\alpha,\beta}$ have analytic regularity in $x$-variable. 
The following proposition states the well-posedness of solutions in the analytic function spaces to (\ref{linear_pr}) in  $E_{\alpha,\beta}$. 
\begin{prop} 
\label{PRO1}
Suppose $u_s-U_0\in C\big(\mathbb{R}^+; W^{1,\infty}_{\alpha}(\R_+)\big)$ and $b_s-B_0\in W^{1,\infty}_{\alpha}(\R_+)$. Then, there exists a constant $\delta>0$ such that for any $T>0$ with $\beta-\delta T>0$, and for any
$(u_0, b_0)(x,y)\in E_{\alpha,\beta}$, the linearized problem (\ref{linear_pr}) admits a unique solution  satisfying
\begin{align*}
 \big(u, b_1\big)(t,x,y)\in C\big([0,T); E_{\alpha,\beta-\delta T}\big)\quad with\quad (u, b_1)|_{t=0}=\big(u_0, b_0\big)(x,y).
\end{align*}
\end{prop}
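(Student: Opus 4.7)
\textbf{Proof plan for Proposition \ref{PRO1}.} The strategy is to decompose in Fourier series in $x$, exploit the analyticity hypothesis to absorb the single $x$-derivative lost in the transport-type and nonlocal terms at the price of shrinking the analytic radius, and handle the vertical diffusion in the $u$-equation through the Dirichlet heat semigroup on $\R_+$. First, I would use the divergence-free constraints together with the boundary conditions $v|_{y=0}=b_2|_{y=0}=0$ to close the system in $(u,b_1)$ via
\begin{align*}
v(t,x,y)=-\int_0^y\partial_x u(t,x,y')\,dy',\qquad b_2(t,x,y)=-\int_0^y\partial_x b_1(t,x,y')\,dy'.
\end{align*}
Expanding $u=\sum_{k\in\mathbb{Z}}e^{ikx}u_k(t,y)$ and $b_1=\sum_{k\in\mathbb{Z}}e^{ikx}b_{1,k}(t,y)$ decouples the modes: for each $k$, $u_k$ solves a 1D heat equation in $y$ with Dirichlet condition at $y=0$, forced by $-iku_su_k-v_k\partial_yu_s+ikb_sb_{1,k}+b_{2,k}b_s'$, while $b_{1,k}$ evolves by a pointwise-in-$y$ ODE in $t$ with an analogous right-hand side.

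Two elementary estimates drive the argument. A direct computation with the Dirichlet heat kernel on $\R_+$ gives $\|e^{t\partial_y^2}f\|_{W^{0,\infty}_\alpha(\R_+)}\le e^{\alpha^2t}\|f\|_{W^{0,\infty}_\alpha(\R_+)}$ for all $t\ge 0$ (extend to $\R$ and use the Gaussian identity $e^{-\alpha y}\ast K_t=e^{-\alpha y}e^{\alpha^2 t}$). Second, for $k\ne 0$ a weighted Hardy-type estimate yields $|v_k(y)|\le\frac{|k|}{\alpha}\|u_k\|_{W^{0,\infty}_\alpha}$ and $|b_{2,k}(y)|\le\frac{|k|}{\alpha}\|b_{1,k}\|_{W^{0,\infty}_\alpha}$. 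Although $v_k$ and $b_{2,k}$ themselves fail to decay in $y$, they appear in the forcing only paired with $\partial_yu_s$ and $b_s'$, both of which lie in $W^{0,\infty}_\alpha$ thanks to the hypotheses on $u_s$ and $b_s$; this is the structural reason the exponential weight is preserved. Together with $\|u_s\|_{L^\infty}+\|b_s\|_{L^\infty}\le C$, all forcing terms are controlled in $W^{0,\infty}_\alpha(\R_+)$ by $C(1+|k|)(\|u_k\|_{W^{0,\infty}_\alpha}+\|b_{1,k}\|_{W^{0,\infty}_\alpha})$.

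Setting $\phi_k(t)=\|u_k(t)\|_{W^{0,\infty}_\alpha}+\|b_{1,k}(t)\|_{W^{0,\infty}_\alpha}$, applying Duhamel's formula to the $u_k$-equation, integrating the $b_{1,k}$-equation directly, and summing leads to
\begin{align*}
\phi_k(t)\le Ce^{\alpha^2t}\phi_k(0)+C(1+|k|)\int_0^te^{\alpha^2(t-s)}\phi_k(s)\,ds,
\end{align*}
whence Gronwall gives $\phi_k(t)\le Ce^{C(1+|k|)t}\phi_k(0)$ with constants depending on $\alpha$, $T$ and the coefficient bounds. Combining with $\phi_k(0)\le C_{\alpha,\beta}e^{-\beta|k|}$ and choosing $\delta$ equal to the Gronwall constant shows that $e^{(\beta-\delta t)|k|}\phi_k(t)$ is uniformly bounded in $k$ for $0\le t\le T$ whenever $\beta-\delta T>0$, so $(u,b_1)(t,\cdot,\cdot)\in E_{\alpha,\beta-\delta T}$. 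Existence then follows by a Picard iteration in $C([0,T];E_{\alpha,\beta-\delta T})$, the same estimate applied to differences producing contraction; alternatively one may invoke an abstract Cauchy--Kowalewskaya scheme in the style of Sammartino--Caflisch \cite{S-C1}. Uniqueness is immediate from the a priori bound applied to the difference of two solutions with vanishing initial data. The step I expect to require the most care is propagating the exponential $y$-weight through the nonlocal terms $v_k,b_{2,k}$, which themselves do not decay in $y$; this is resolved by the structural observation that these quantities always appear paired with the decaying coefficients $\partial_yu_s$ or $b_s'$.
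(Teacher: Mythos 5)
Your argument is correct and is essentially the proof the paper intends: the paper omits it, referring to Proposition 1 of \cite{G-D} and Proposition 1.1 of \cite{L-X-Y3}, which proceed exactly by this Fourier-mode decoupling, the Dirichlet heat-semigroup bound in $W^{0,\infty}_\alpha$, the observation that the non-decaying nonlocal terms $v_k,b_{2,k}$ only ever multiply the decaying coefficients $\partial_yu_s,b_s'$, and a mode-wise Gronwall estimate giving growth $e^{C(1+|k|)t}$ and hence a linearly shrinking analyticity radius. The only caveat is that your Hardy-type bounds $|v_k|\le \frac{|k|}{\alpha}\|u_k\|_{W^{0,\infty}_\alpha}$ and $|b_{2,k}|\le \frac{|k|}{\alpha}\|b_{1,k}\|_{W^{0,\infty}_\alpha}$ require $\alpha>0$, which is the intended setting here.
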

This proposition shows that the linearized problem (\ref{linear_pr}) is well-posed in the analytic setting, at least in local time. Its proof is similar to the  Proposition 1 in \cite{G-D} and the Proposition 1.1 in \cite{L-X-Y3}, thus  is omitted for brevity.

Denoted by $\mathcal{T}(t,s)$ the solution operator of \eqref{linear_pr}
\begin{equation}\label{def_T}
\mathcal{T}(t,s)(u_0, b_0)~:=~(u,b_1)(t,\cdot),
\end{equation}
where $(u,b_1)$ is the solution to the problem \eqref{linear_pr} with $(u, b_1)|_{t=s}=(u_0, b_0).$
Since the space $E_{\alpha,\beta}$ is dense in the Sobolev type space
\[
\h_\alpha^m~:=~\{f=f(x,y),(x,y)\in\mathbb{T}\times\R_+: ~\|f\|_{\h_\alpha^m}= 
\|f(\cdot)\|_{H^m(\mathbb{T},W_\alpha^{0,\infty}(\mathbb{R}_+))}<\infty\},\quad m\geq0,
\]
by Proposition \ref{PRO1} we can define for all $m_1, m_2\geq0,$
\begin{align*}
\displaystyle \|\mathcal{T}\|_{\mathcal{L}(\h_\alpha^{m_1},\h_\alpha^{m_2})}~=~\sup_{(u_0,b_0)\in E_{\alpha,\beta}}\frac{\|\mathcal{T}(u_0,b_0)\|_{\h^{m_2}_\alpha}}{\|(u_0,b_0)\|_{\h^{m_1}_\alpha}}\in \R_+\cup\{+\infty\}.
\end{align*}
Motivated by the paper \cite{G-D} on the instability of solutions to the linearized Prandtl equations,
the following result shows that when the background magnetic field profile degenerates at the non-degenerate critical point of the background velocity shear layer,
  the linearized problem \ref{linear_pr} is ill-posed  in Sobolev spaces.
\begin{thm}\label{thm_lin}
Let $u_s(t,y)$ be the solution of   \eqref{heat} satisfying
 $$u_s-U_0\in C\big(\R_+; W_\alpha^{4,\infty}(\R_+)\big)\cap C^1\big(\R_+; W_\alpha^{2,\infty}(\R_+)\big),$$
and $b_s(y)-B_0\in W_\alpha^{4,\infty}(\R_+)$.
Assume  that $b_s(y)$ and the initial shear layer $U_s(y)$ satisfy
 \[\exists ~a>0, \quad\textit{s.t.}\quad b_s(a)=b_s'(a)=U_s'(a)=0,\quad U_s''(a)\neq0. \]
Then, there exists $\sigma>0$ such that for any $\delta>0$,
\begin{equation}\label{est_in}
\sup\limits_{0\leq s< t\leq\delta}\big\|e^{-\sigma(t-s)\sqrt{|\partial_x|}}\mathcal{T}(t,s)\big\|_{\cl(\h_\alpha^m,\h_\alpha^{m-\mu})}
~=~+\infty,\quad \forall \alpha,m\geq0,~\mu\in[0,\frac{1}{4}).
\end{equation}
\end{thm}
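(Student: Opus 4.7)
The plan is to adapt the strategy of \cite{G-D} to the MHD setting, exhibiting for each large integer wavenumber $k$ an approximate growing mode of the linearized system \eqref{linear_pr} whose temporal amplification rate scales like $\sqrt{k}$. Since the regularity loss permitted on the right-hand side of \eqref{est_in} is $k^{\mu}$ with $\mu<1/4$, and the weight $e^{-\sigma(t-s)\sqrt{|\partial_x|}}$ only damps by $e^{-\sigma(t-s)\sqrt{k}}$, any such rapidly growing mode forces the operator norm in \eqref{est_in} to be unbounded along $k\to\infty$.

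First I would freeze time at a point $t_0\in[0,\delta)$ and study the spectral problem on one Fourier mode. Writing $u=\partial_y\psi$, $v=-ik\psi$, $b_1=\partial_y\phi$, $b_2=-ik\phi$, and taking the ansatz $(\psi,\phi)=e^{ikx-ikct}(\Psi,\Phi)(y)$, the linearized induction equation integrates, using decay at infinity, to
\begin{equation*}
(u_s(t_0,y)-c)\,\Phi(y)=b_s(y)\,\Psi(y).
\end{equation*}
Substituting this into the linearized momentum equation and multiplying by $u_s-c$ yields the singular third-order ODE
\begin{equation*}
\bigl[(u_s-c)^2-b_s^2\bigr]\Psi'=\frac{u_s'(t_0,y)\bigl[(u_s-c)^2-b_s^2\bigr]}{u_s-c}\Psi-\frac{u_s-c}{ik}\Psi''',
\end{equation*}
whose inviscid coefficient vanishes exactly where the Alfv\'en speed matches $u_s-c$. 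Away from this locus the equation is essentially first order in $\Psi$ and the inviscid modes are well behaved; the instability must therefore come from a thin critical layer where the viscous term is restored.

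Next I would perform the critical-layer analysis near $y=a$. For $c=u_s(t_0,a)-c_*$ with $c_*$ small, the hypothesis $U_s'(a)=0$, $U_s''(a)\neq 0$ together with $b_s(a)=b_s'(a)=0$ yields the local expansions
\begin{equation*}
u_s(t_0,y)-c\simeq c_*+\tfrac{1}{2} U_s''(a)(y-a)^2,\qquad b_s(y)\simeq\tfrac{1}{2} b_s''(a)(y-a)^2,
\end{equation*}
with lower-order corrections of size $O(t_0)$ controlled by \eqref{heat}. The zero set of $(u_s-c)^2-b_s^2$ lies at distance $\sim\sqrt{|c_*|}$ from $a$, and the balance between the inviscid coefficient $\sim |c_*|^2$ and the viscous term $\sim|c_*|/k$ forces $|c_*|\sim k^{-1/2}$, i.e.\ a critical layer of width $k^{-1/4}$. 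Inside this layer the rescaled equation reduces to an Airy-type problem; matching its outgoing tail with the outer inviscid solution decaying at $y=+\infty$ and vanishing at $y=0$ produces a dispersion relation $F(c;k,t_0)=0$. A fixed-point argument modeled on \cite{G-D} then yields a family $c=c(k,t_0)$ with $\mathrm{Im}\,c(k,t_0)\sim k^{-1/2}$, hence instantaneous growth exponent $k\,\mathrm{Im}\,c\sim\sqrt{k}$, dominating $\sigma\sqrt{k}$ for $\sigma$ sufficiently small.

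Finally I would upgrade the approximate eigenfunction $(\Psi_k^{\mathrm{app}},\Phi_k^{\mathrm{app}})$ to a sharp lower bound on $\mathcal{T}(t,s)$. The construction leaves a residual of size $O(k^{-N})$ for any $N$, and the slow time-dependence of $u_s(t,y)$ introduces a perturbation that can be absorbed on a time window of length $\sim k^{-1/2}$ by a Duhamel iteration controlled via Proposition \ref{PRO1}. Comparing the $\h_\alpha^m$ and $\h_\alpha^{m-\mu}$ norms of the resulting single-frequency wave packets (essentially paying a factor $k^{\mu}$ in the Sobolev regularity in $x$) and sending $k\to\infty$ produces the required blow-up in \eqref{est_in}. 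The main technical obstacle is the critical-layer matching itself: the simultaneous quadratic vanishing of $u_s-c$ and of $b_s$ at $y=a$ changes the algebraic structure of the dispersion function relative to the pure Prandtl case of \cite{G-D}, and one must verify that the root produced by matching has strictly positive imaginary part rather than a neutral or stabilizing one.
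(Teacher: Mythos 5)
Your overall architecture --- high-wavenumber approximate growing modes with amplification rate $\sim\sqrt{k}$, followed by a Duhamel/contradiction argument against the assumed operator bound --- matches the paper's, and your slaving relation $(u_s-c)\,\Phi=b_s\,\Psi$ is exactly the paper's formula \eqref{def-Phi}. But there is a genuine gap at the central step. You propose to set up a \emph{new} critical-layer problem in which the Alfv\'en-modified coefficient $(u_s-c)^2-b_s^2$ degenerates, derive a dispersion relation $F(c;k,t_0)=0$, and extract an unstable root by a fixed-point argument ``modeled on \cite{G-D}''; your last sentence then concedes that you have not verified that this root has the destabilizing sign. That verification is the entire content of the theorem: in your layer of width $k^{-1/4}$ both $u_s-c$ and $b_s$ are of size $k^{-1/2}$, so the magnetic term genuinely changes the model ODE, and there is no a priori reason the modified spectral problem is unstable (Theorem \ref{MTH} shows that a non-degenerate $b_s$ suppresses the instability entirely, so the answer is sensitive to exactly this structure). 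The paper sidesteps the issue: it solves no new eigenvalue problem, reuses the G\'erard-Varet--Dormy pair $(\tau,W)$ of \eqref{SC} verbatim for the velocity part, and defines the magnetic part explicitly by $\Phi_\epsilon=b_sW_\epsilon/(\omega+u_s)$ as in \eqref{def-Phi}. The hypothesis $b_s(a)=b_s'(a)=0$ is used twice: once so that this quotient stays regular across the critical layer (where $\omega+u_s\sim\sqrt{\epsilon}$ but $b_s=O((y-a)^2)$), and once so that the magnetic back-reaction $\epsilon^{-1}b_s^2[\cdots]$ in the momentum residual \eqref{tr} is only $O(\epsilon^{-1/4}+\epsilon^{-5/4}t^4)$, hence absorbable. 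To complete your route you would have to actually prove existence of an unstable root of the Alfv\'en-modified Airy problem; the paper's reduction shows this is unnecessary.

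A second, quantitative problem concerns the time window. Absorbing the error ``on a time window of length $\sim k^{-1/2}$'' cannot close the contradiction: to beat the permitted loss $k^{\mu}=\epsilon^{-\mu}$ by the gain $e^{(\sigma_0-\sigma)t/\sqrt{\epsilon}}$ one needs $t\gg\sqrt{\epsilon}\,|\ln\epsilon|$, and on a window of length $\sqrt{\epsilon}$ the growth factor is only $O(1)$. On the other side, the drift of the critical point $a(t)$ makes $b_s(a(t))\neq0$ for $t>0$ and generates the $\epsilon^{-5/4}t^4$ term in \eqref{bd_tr}, which caps the usable window at $t\ll\epsilon^{1/4}$ --- an MHD-specific constraint absent from \cite{G-D}. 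The paper's contradiction lives precisely in the intermediate regime $\frac{\mu}{\sigma_0-\sigma}|\ln\epsilon|\sqrt{\epsilon}\ll t\ll\epsilon^{1/4}$, and your bookkeeping misses both ends of it. Finally, the Duhamel term must be controlled by the \emph{assumed} Sobolev bound \eqref{est_in} on $\mathcal{T}$ (transferred to $\mathcal{T}_\epsilon$ in $W_\alpha^{0,\infty}$), not by Proposition \ref{PRO1}, whose analytic estimates are not uniform in $k$.
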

\begin{rem}
\label{rem1}
 {We remark that the result in Theorem \ref{thm_lin} improves the previous work \cite{L-X-Y3} significantly. In \cite{L-X-Y3}  the partial viscous term $\partial_y^2b_1$ is included in the second equation of (\ref{BLE}). Consequently, the background shear flow solution should take the form of $\big(u_s(t,y),0, b_s(t,y), 0\big)$ in \cite{L-X-Y3}, instead of $\big(u_s(t,y),0, b_s(y), 0\big)$. Under the same conditions on the initial data of $(u_s(t,y), b_s(t,y))$ as those in Theorem \ref{thm_lin}:
\[\exists ~a>0, \quad\textit{s.t.}\quad b_s(0, a)=\partial_yb_s(0, a)=U_s'(a)=0,\quad U_s''(a)\neq0, \]
  the same conclusion (\ref{est_in}) still holds for the linearized problem (\ref{linear_pr}) with additional partial diffusion term $\partial_y^2b_1$ in the second equation of (\ref{linear_pr}).  {More precisely, to obtain the same results in (\ref{est_in}), the assumptions $\partial^i_yb_s(0, a)=0\ (i=0,1,...,6)$ are required in \cite{L-X-Y3}, which can be relaxed to the assumptions   $\partial^i_yb_s(0, a)=0\ (i=0,1,2)$ by the construction of approximate solutions proposed in this paper.} }
\end{rem}

Comparing the previous results obtained in \cite{L-X-Y1, L-X-Y3} with the results stated in Theorems \ref{MTH} and \ref{thm_lin}, we find that the partial viscous term $\partial_y^2b_1$ in the magnetic boundary layer equation has only few effects on both the well-posedness of solution to the MHD boundary layer equations (\ref{BLE}) and the ill-posednes of solutions to the linearized problem (\ref{linear_pr}). This is indeed one of the observations in this paper.  {More precisely, when there is no partial viscous term $\partial_y^2b_1$ in MHD boundary layer equations, we can still   establish the same well-posedness theory of solutions as what was proved in \cite{L-X-Y1} provided that the magnetic field does not degenerate. And when the magnetic field degenerates in the sense of   Remark \ref{rem1}, we can show the same ill-posedness result as what was achieved in \cite{L-X-Y3}. However, different from the energy method used in \cite{L-X-Y1}, to prove the well-posedess theory, we need to use some equivalent Sobolev spaces and the induction method to overcome the difficulties caused by the absence of partial diffusion term $\partial_y^2b_1$. In addition, as what is stated in Remark \ref{rem1}, we find a new construction of growing modes, which can essentially relax the assumptions required in \cite{L-X-Y3}.}

The rest of the paper is organized as follows. In Sections 2 and 3 , we will prove
the well-posedness of  (\ref{BLE}) in  Sobolev spaces when the tangential magnetic field is not degenerate. Precisely, in Section 2, we reformulate the boundary layer system into a model similar to the model of Chaplygin gas by  variable transformation. In  Section 3, we  establish the well-posedness theory for the reduced Chaplygin type model that leads to  the well-posedness of the original boundary layer system. In  Section 4, we will prove Theorem \ref{thm_lin} about the linear instability of \eqref{linear_pr} in Sobolev spaces when the tangential magnetic field is degenerate at one point. Finally, the formal derivation of MHD boundary layer problem (\ref{BLE}) is given in the Appendix A. The Appendix B gives the proof of Lemma \ref{MTV}.

\bigskip

\section{Reformulation of the  system}

To establish the well-posedness  of the boundary layer problem \eqref{BLE}, the main difficulty comes from the loss of $x-$derivatives in the normal components $v$ and $b_2$. To overcome this difficulty,  inspired by \cite{L-X-Y1}, from the divergence free condition of the magnetic field in $(\ref{BLE})$, we introduce the stream function $\psi$ of magnetic field $(b_1, b_2)$, such that
\begin{align*}
\partial_{y}\psi=b_1,\qquad-\partial_x\psi=b_2.
\end{align*}
It follows from the second equation in \eqref{BLE} that the stream function $\psi$ satisfies the transport equation:
\begin{align}
\label{EP}
\partial_t\psi+u\partial_x\psi+v\partial_{y}\psi=0,
\end{align}
with the boundary condition
\begin{align}
\psi|_{y=0}=0.
\end{align}
Its initial data is given by  the initial data $b_{0}$,
\begin{align}
\label{IG}
\psi(0,x,y)=\int_0^{y}b_{0}(x,s)ds.
\end{align}
From $(\ref{CC})$, one has
\begin{align}
\label{ASSUM1}
\partial_y\psi(0,x,y)= b_{0}(x,y) \geq \delta_0,
\end{align}
which implies that $\psi(0,x,y)\geq 0$ is an increasing function with respect to $y$ with
\begin{align*}
\lim_{y\rightarrow+\infty}\psi(0,x,y)=+\infty.
\end{align*}
Then, the transport equation (\ref{EP}) and the initial data (\ref{IG}) yield that $\psi(t,x,y)\geq 0$ is an increasing function with respect to $y$ for every $(t,x)\in[0,T]\times\mathbb{T}$, and
\begin{align*}
\lim_{y\rightarrow+\infty}\psi(t,x,y)=+\infty,
\end{align*}
provided that $u$ and $v$ are Lipschitz continuous functions.

In this way, we can introduce a new coordinate transformation,
\begin{align}
\label{NV}
\bar{t}=t,\quad\bar{x}=x,\quad\bar{y}=\psi(t,x,y).
\end{align}
In the new coordinates (\ref{NV}), the region $\{(t,x,y): t\in[0,T],x\in\T,y\in\R_+\}$ is mapped into $\{(\bar t,\bar x,\bar y): \bar t\in[0,T],\bar x\in\T,\bar y\in\R_+\}$, and the boundary of $\{y=0\}$ ($\{y=+\infty\}$ respectively)  becomes the boundary of $\{\bar y=0\}$ ($\{\bar y=+\infty\}$ respectively). Also, the equations in (\ref{BLE}) can be written as
\begin{align}
\label{BLEE}
\left\{
\begin{array}{ll}
\partial_{\bar{t}}u+u\partial_{\bar{x}}u+\partial_{\bar{x}}p-b_1\partial_{\bar{y}}(b_1\partial_{\bar{y}}u)-b_1\partial_{\bar{x}}b_1=0,\\
\partial_{\bar{t}}b_1+u\partial_{\bar{x}}b_1-b_1\partial_{\bar{x}}u=0,
\end{array}
\right.
\end{align}
with the boundary condition
\begin{align}
\label{BCP}
u|_{\bar{y}=0}=0,
\end{align}
and the far-field condition
\begin{align}
\label{FCP1}
\lim_{\bar{y}\rightarrow+\infty}u=U,\quad \lim_{\bar{y}\rightarrow+\infty}b_1=B.
\end{align}
Without any confusion, we still denote the initial data by $(u_0, b_0)=(u_0, b_0)(\bar{x}, \bar{y})$.

It is noted that there are no normal components of velocity and magnetic field in the reduced equations.
In the subsequent section,  we will study  (\ref{BLEE})-(\ref{FCP1}) with initial data $(u_0, b_0)$ in
some Sobolev spaces, which in turn yields a corresponding result on the original problem (\ref{BLE}).

\begin{rem}
If one sets $\rho=1/b_1$, then the equations \eqref{BLEE} in terms of $(\rho, u)$ become
\begin{align}
\label{BLLE}
\left\{
\begin{array}{ll}
\partial_{\bar{t}}\rho+u\partial_{\bar{x}}\rho+\rho\partial_{\bar{x}}u=0,\\
\rho\partial_{\bar{t}}u+\rho u\partial_{\bar{x}}u+\rho\partial_{\bar{x}}p+\partial_{\bar{x}}(-\rho^{-1})=\partial_{\bar{y}}(\rho^{-1}\partial_{\bar{y}}u),
\end{array}
\right.
\end{align}
which is related to a model of Chaplygin gas.
\end{rem}

\bigskip

\section{Well-posedness of Solutions}

In this section, we will study the initial-boundary value problem (\ref{BLEE})-(\ref{FCP1}) and prove the well-posedness of solutions in Theorem \ref{MTH}.
 Without any confusion, we replace the notation of
$(\bar{t}, \bar{x}, \bar{y})$ by $(t,x,y)$ and write (\ref{BLEE})-(\ref{FCP1}) as
\begin{align}
\label{BLLEE}
\left\{
\begin{array}{ll}
\partial_{t}b_1+u\partial_{x}b_1-b_1\partial_{x}u=0,\\
\partial_{t}u+u\partial_{x}u+\partial_{x}p-b_1\partial_{y}(b_1\partial_{y}u)-b_1\partial_{x}b_1=0,
\end{array}
\right.
\end{align}
with the initial-boundary conditions
\begin{align}
\label{BCP}
(b_1, u)|_{t=0}=(b_0, u_0)(x,y),\qquad u|_{y=0}=0,
\end{align}
and the far-field conditions
\begin{align}
\label{FCP}
\lim_{y\rightarrow+\infty}b_1=B, \quad \lim_{y\rightarrow+\infty}u=U.
\end{align}
Denote
\begin{align*}
A_0(b_1,u)=\left(
\begin{array}{ll}
b_1^{-1},&0\\
0,&b_1^{-1}
\end{array}
\right),\qquad
A_1(b_1,u)=\left(
\begin{array}{ll}
b_1^{-1}u,&-1\\
-1,&b_1^{-1}u
\end{array}
\right),
\end{align*}
and
\begin{align*}
B(b_1,u)=\left(
\begin{array}{ll}
0,&0\\
0,&b_1
\end{array}
\right).
\end{align*}
Then the system (\ref{BLLEE}) can be formulated as the following quasi-linear symmetrical system with low order term and partial diffusivity,
\begin{align}
\label{VBE}
A_0(b_1,u)\partial_t\left(
\begin{array}{c}
b_1\\
u
\end{array}
\right)
+
A_1(b_1,u)\partial_x\left(
\begin{array}{c}
b_1\\
u
\end{array}
\right)
=\partial_y\left(B(b_1,u)\partial_y\left(
\begin{array}{c}
b_1\\
u
\end{array}
\right)\right)
-\left(
\begin{array}{c}
0\\
b_1^{-1}\partial_xp
\end{array}
\right).
\end{align}
Here $A_0$ is a positive definite, symmetric matrix, provided that $b_1>0$, and the matrix $A_1$ is symmetric.

To state the main result in this section, we define some function spaces. Set
\begin{align*}
\Omega=\{(x,y): x\in\mathbb{T}, y\in\mathbb{R}_+\},
\end{align*}
and
\begin{align*}
\Omega_T=[0,T]\times\Omega=\{(t,x,y): t\in[0,T], x\in\mathbb{T}, y\in\mathbb{R}_+\}.
\end{align*}
Denote by $H^k(\Omega)$ the classical Sobolev spaces of  function $f\in H^k(\Omega)$ such that 
\[
\|f\|_{H^k(\Omega)}:=\left(\sum_{\alpha_1+\alpha_2\leq k}
\|\partial^{\alpha_1}_x\partial^{\alpha_2}_y f\|^2_{L^2(\Omega)}\right)^\frac{1}{2}<\infty.
\]
The derivative operator with multi-index is denoted as
\begin{align*}
\partial^\alpha=\partial_t^{\alpha_1}\partial_x^{\alpha_2}\partial_y^{\alpha_3},\quad \alpha=(\alpha_1, \alpha_2, \alpha_3)\in\mathbb{N}^3\quad\mbox{with}\quad |\alpha|=\alpha_1+\alpha_2+\alpha_3.
\end{align*}
The Sobolev space and norm are thus defined as
\begin{align*}
\mathcal{H}^m(\Omega_T)=\{f(t,x,y):~\|f\|_{\mathcal{H}^m(\Omega_T)}= 
\sup_{0\leq t< T}\|f(t)\|_{\mathcal{H}^m}<\infty\}
\end{align*}
with
\begin{align*}
\|f(t)\|_{\mathcal{H}^m}= 
\left(\sum_{|\alpha|\leq m}\|\partial^\alpha f(t,\cdot)\|_{L^2(\Omega)}^2\right)^{1/2}.
\end{align*}
Similarly, we denote the tangential derivative operator
\begin{align*}
\partial_{\tau}^\alpha=\partial_t^{\alpha_1}\partial_x^{\alpha_2},\quad \alpha=(\alpha_1, \alpha_2)\quad \hbox{with}\quad |\alpha|=\alpha_1+\alpha_2,
\end{align*}
and define the following non-isotropic Sobolev space
\begin{align*}
\|f(t)\|_{\mathcal{B}^{k_1,k_2}}=\left(\sum_{|\alpha| \leq k_1, 0\leq q\leq k_2}\|\partial_\tau^\alpha\partial_y^q f(t,\cdot)\|^2_{L^2(\Omega)}\right)^{1/2}.
\end{align*}
Note  that
\begin{align}\label{norm-equ}
\left\|f(t)\right\|_{\mathcal{H}^m}^2=\sum_{j=0}^m\left\|f(t)\right\|_{\mathcal{B}^{m-j,j}}^2.
\end{align}
Moreover, we define
\begin{align*}
\displaystyle \|f(t)\|_{\mathcal{C}^k}=\sum_{|\alpha|+q\leq k}\|\partial_\tau^{\alpha}\partial_y^{q}f(t,\cdot)\|_{L^2\big(\T_x; L^\infty(\R_{+,y})\big)}. 
\end{align*}
%
We shall use the notation $A\lesssim B$   meaning   $|A|\leq C |B|$ with a generic constant $C>0$.

The following 
inequality will be used frequently and its proof will be given in Appendix B. 
\begin{lem}\label{MTV}
For   suitable functions $u$ and $v$, 
it holds that for any $\alpha, \beta\in\mathbb{N}^3, |\alpha|+|\beta|\leq m$ with $m\geq2,$
\begin{align}\label{Moser}
\|(\partial^\alpha u\cdot \partial^\beta v )(t,\cdot)\|_{L^2(\Omega)}\lesssim \|u(t)\|_{\mathcal{H}^m}\|v(t)\|_{\mathcal{H}^m},
\end{align}
and
\begin{align}\label{Moser1}
\|(\partial^\alpha u\cdot \partial^\beta v )(t,\cdot)\|_{L^2(\Omega)}\lesssim \|u(t)\|_{\mathcal{C}^m}\|v(t)\|_{\mathcal{H}^m}.
\end{align}
\end{lem}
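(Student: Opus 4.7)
Both inequalities are standard Moser-type product estimates. Since the $L^{2}(\Omega)$ norm on the left is evaluated at a fixed $t$, I treat $t$ as a parameter throughout: the factors $\partial^{\alpha}u(t,\cdot)$ and $\partial^{\beta}v(t,\cdot)$ are then functions on the two-dimensional domain $\Omega = \mathbb{T} \times \mathbb{R}_{+}$, so I may freely use the 2D Sobolev embedding $H^{2}(\Omega) \hookrightarrow L^{\infty}(\Omega)$ and the Ladyzhenskaya inequality $\|f\|_{L^{4}(\Omega)} \lesssim \|f\|_{L^{2}(\Omega)}^{1/2}\bigl(\|\partial_{x}f\|_{L^{2}(\Omega)} + \|\partial_{y}f\|_{L^{2}(\Omega)}\bigr)^{1/2}$. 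The key bookkeeping observation is that $\|\cdot\|_{\mathcal{H}^{m}}$ controls every mixed derivative $\partial_{t}^{i}\partial_{x}^{j}\partial_{y}^{k}$ of total order at most $m$ in $L^{2}(\Omega)$, irrespective of how the derivatives are split among $t,x,y$, so the presence of the time derivative inside $\partial^{\alpha}$ poses no obstacle provided one tracks the total order.

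For \eqref{Moser}, by the symmetric roles of $u$ and $v$ I may assume $|\alpha|\leq|\beta|$, hence $|\alpha|\leq m/2$. If $|\alpha|\leq m-2$, I apply H\"older in $\Omega$ as $L^{\infty}_{xy}\cdot L^{2}_{xy}$ and place $\partial^{\alpha}u$ in $L^{\infty}$; the 2D Sobolev embedding gives $\|\partial^{\alpha}u(t,\cdot)\|_{L^{\infty}(\Omega)}\lesssim\|\partial^{\alpha}u(t,\cdot)\|_{H^{2}(\Omega)}\leq\|u\|_{\mathcal{H}^{m}}$, while $\|\partial^{\beta}v(t,\cdot)\|_{L^{2}(\Omega)}\leq\|v\|_{\mathcal{H}^{m}}$ is immediate. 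The only remaining configuration (given $|\alpha|\leq|\beta|$ and $|\alpha|+|\beta|\leq m$) is $m=2$ with $|\alpha|=|\beta|=1$; there the H\"older pairing $L^{4}\cdot L^{4}$ combined with Ladyzhenskaya yields $\|\partial^{\alpha}u\|_{L^{4}}\|\partial^{\beta}v\|_{L^{4}}\lesssim\|u\|_{\mathcal{H}^{m}}\|v\|_{\mathcal{H}^{m}}$.

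For \eqref{Moser1}, I exploit the $L^{2}_{x}L^{\infty}_{y}$ structure built into $\mathcal{C}^{m}$. First apply H\"older in $y$ with $\partial^{\alpha}u$ placed in $L^{\infty}_{y}$, obtaining the pointwise-in-$x$ bound $\|\partial^{\alpha}u\cdot\partial^{\beta}v\|_{L^{2}_{y}}\leq\|\partial^{\alpha}u\|_{L^{\infty}_{y}}\|\partial^{\beta}v\|_{L^{2}_{y}}$; then take the $L^{2}_{x}$ norm and pair by H\"older as $L^{2}_{x}\cdot L^{\infty}_{x}$. The first factor equals $\|\partial^{\alpha}u\|_{L^{2}_{x}L^{\infty}_{y}}\leq\|u\|_{\mathcal{C}^{m}}$ whenever $|\alpha|\leq m$, and the second factor $\|\partial^{\beta}v\|_{L^{\infty}_{x}L^{2}_{y}}$ is lifted from $\|\partial^{\beta}v\|_{L^{2}_{xy}}+\|\partial_{x}\partial^{\beta}v\|_{L^{2}_{xy}}$ by the one-dimensional Sobolev embedding on $\mathbb{T}$, which fits whenever $|\beta|\leq m-1$. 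The endpoint $|\beta|=m$ forces $|\alpha|=0$; there the same 1D trick gives $\|u\|_{L^{\infty}_{xy}}\lesssim\|u\|_{\mathcal{C}^{1}}\leq\|u\|_{\mathcal{C}^{m}}$, and one pairs $L^{\infty}_{xy}\cdot L^{2}_{xy}$.

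The main technical nuisance is not analytic but purely bookkeeping at the regularity endpoints: one must spend the single spare tangential derivative in the $x$-direction rather than in $y$, so that the 1D Sobolev lift on $\mathbb{T}$ does not overshoot the available regularity of $u$ or $v$, and one must recognise that the degenerate sub-case $m=2$, $|\alpha|=|\beta|=1$ of \eqref{Moser} is outside the reach of the $L^{\infty}\cdot L^{2}$ split and requires the Ladyzhenskaya interpolation instead.
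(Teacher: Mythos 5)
Your proof of \eqref{Moser} is correct and uses the same two tools as the paper (the 2D embeddings $H^{2}(\Omega)\hookrightarrow L^{\infty}(\Omega)$ and $H^{1}(\Omega)\hookrightarrow L^{4}(\Omega)$ combined with H\"older), only with a different partition of cases: the paper splits according to whether one of $\alpha,\beta$ vanishes (using $L^{\infty}\cdot L^{2}$ there and $L^{4}\cdot L^{4}$ whenever both are nonzero), whereas you reserve the $L^{4}\cdot L^{4}$ pairing for the single endpoint $m=2$, $|\alpha|=|\beta|=1$; both partitions cover all cases and the estimates match. One cosmetic caveat: the homogeneous Ladyzhenskaya interpolation $\|f\|_{L^{4}}\lesssim\|f\|_{L^{2}}^{1/2}\|\nabla f\|_{L^{2}}^{1/2}$ as you quote it fails on $\mathbb{T}\times\mathbb{R}_{+}$ (test it on functions constant in $x$ and slowly varying in $y$); you should invoke the inhomogeneous form $\|f\|_{L^{4}(\Omega)}\lesssim\|f\|_{H^{1}(\Omega)}$, which is what the paper uses and which suffices for your endpoint case. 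For \eqref{Moser1} the paper merely writes ``the proof is similar,'' so your detailed argument --- H\"older as $L^{2}_{x}L^{\infty}_{y}\cdot L^{\infty}_{x}L^{2}_{y}$, lifting $\|\partial^{\beta}v\|_{L^{\infty}_{x}L^{2}_{y}}$ by the one-dimensional Sobolev embedding on $\mathbb{T}$ when $|\beta|\leq m-1$, and handling the endpoint $|\beta|=m$, $|\alpha|=0$ separately --- is a correct and welcome elaboration that correctly exploits the anisotropic structure of the $\mathcal{C}^{m}$ norm.
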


Now, we state the main result in this section.
\begin{thm}[Local existence]
\label{T3.1}
 Let $m\geq 4$, and suppose that the trace $(U, B, p)(t,x)$ of the outflow satisfies
\begin{align}\label{OUTE}
\sup_{0\leq t\leq T}\sum_{|\alpha|\leq2m}\|\partial_\tau^\alpha (U, B, p)(t,\cdot)\|_{L^2(\mathbb{T}_x)}\leq M
\end{align}
for some positive constant $M$. Assume that the initial data $(u_0, b_0)$ satisfies
\begin{align}
 \label{IID}
\big( b_0-B(0,x), u_0-U(0,x)\big)\in {H}^{3m}(\Omega),
\qquad b_0\geq \delta_0
 \end{align}
 for some positive constant $\delta_0$, and satisfies the compatibility conditions up to the $m-$th order for the  initial-boundary problem (\ref{VBE}) with (\ref{BCP})-(\ref{FCP}).
Then there exists a positive $T_*$, such that the problem (\ref{VBE}), (\ref{BCP})-(\ref{FCP}) admits a unique solution $(b_1, u)$ satisfying
\begin{align}\label{positi}
b_1(t,x,y)\geq \delta_0/2,\quad (t,x,y)\in\Omega_{T_*},
\end{align}
and
\begin{align}
b_1-B(t,x)\in\mathcal{H}^m(\Omega_{T_*}),\qquad u-U(t,x)\in\mathcal{H}^m(\Omega_{T_*}),\qquad \partial_y u \in L^2(0,T_*; \mathcal{H}^m).
\end{align}
\end{thm}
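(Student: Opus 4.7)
The plan is to construct the solution by a classical Picard iteration on a linearization of \eqref{VBE}, derive uniform a priori bounds in $\mathcal{H}^m$ through a two-layer argument that first handles tangential derivatives and then normal ones by induction, and finally pass to the limit. Throughout, the key structural observation is that the reformulated system is symmetric in the tangential variables with a genuine diffusion $-b_1\partial_y(b_1\partial_y u)$ acting on the $u$-component only, and that the equation for $b_1$ involves no $y$-derivative at all; accordingly the argument proceeds much as for a symmetric hyperbolic-parabolic system with degenerate dissipation.

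First I would linearize \eqref{BLLEE} by freezing the coefficients at a previous iterate $(b_1^{(n)}, u^{(n)})$:
\begin{align*}
\partial_t b_1^{(n+1)} + u^{(n)}\partial_x b_1^{(n+1)} - b_1^{(n)}\partial_x u^{(n+1)} &= 0,\\
\partial_t u^{(n+1)} + u^{(n)}\partial_x u^{(n+1)} + \partial_x p - b_1^{(n)}\partial_y\bigl(b_1^{(n)}\partial_y u^{(n+1)}\bigr) - b_1^{(n)}\partial_x b_1^{(n+1)} &= 0,
\end{align*}
initialized by $(b_1^{(0)}, u^{(0)}) = (b_0, u_0)$ extended in $t$ in a way consistent with the compatibility conditions (constructed by repeatedly differentiating \eqref{VBE} in $t$ and using the initial data, with the order $3m$ assumed in \eqref{IID} chosen to exceed what is needed). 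Each step is solvable: the $u^{(n+1)}$-equation is a linear uniformly parabolic equation in $y$ as long as $b_1^{(n)} \geq \delta_0/4$, and the $b_1^{(n+1)}$-equation is then a linear transport equation with source $b_1^{(n)}\partial_x u^{(n+1)}$; persistence of the lower bound $b_1^{(n)} \geq \delta_0/2$ on a short interval follows from the transport structure and a Gronwall argument on the iterate.

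For the uniform bound in $\mathcal{H}^m$, I would use the decomposition \eqref{norm-equ} and induct on the number $j$ of normal derivatives. The base step $j=0$, controlling $\|\cdot\|_{\mathcal{B}^{m,0}}$, consists in applying $\partial_\tau^\alpha$ with $|\alpha|\leq m$ to \eqref{VBE}, taking the inner product with $(\partial_\tau^\alpha b_1, \partial_\tau^\alpha u)^\top$, and using the symmetry of $A_0, A_1$ together with the positivity $A_0 \geq (2/\delta_0)^{-1} I$ to obtain the energy identity with a good dissipation $\int b_1|\partial_y\partial_\tau^\alpha u|^2$; all commutators that arise are absorbed by Lemma \ref{MTV} and Young's inequality, using that $m\geq 4$. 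For the inductive step $j \to j+1$, one exploits the fact that the $b_1$-equation has no $y$-derivative, so that after commuting $\partial_y^{j+1}\partial_\tau^{m-j-1}$ through it one still has a transport equation on $\partial_y^{j+1}b_1$ whose source is controlled by the already-estimated derivatives of $u$; for the corresponding normal derivative of $u$, one inverts the $u$-equation for $\partial_y^2 u$,
\begin{align*}
\partial_y^2 u = \frac{1}{b_1^2}\bigl(\partial_t u + u\partial_x u + \partial_x p - b_1\partial_x b_1\bigr) - \frac{\partial_y b_1}{b_1}\partial_y u,
\end{align*}
and iterates in $j$ to reduce $\partial_y^{j+1}u$ to tangential derivatives and normal derivatives of $b_1$ that were bounded in the previous sub-step. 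Combining all sub-steps and applying Gronwall closes the estimate on a time interval $[0,T_*]$ uniform in $n$, while \eqref{positi} for $b_1^{(n+1)}$ is propagated from the transport equation.

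Convergence of the scheme then follows by showing that the differences $(\delta b_1^{(n)}, \delta u^{(n)})=(b_1^{(n+1)}-b_1^{(n)}, u^{(n+1)}-u^{(n)})$ contract in the weaker space $L^\infty_t L^2_{x,y} \cap L^2_t(L^2_x H^1_y)$: one subtracts two consecutive systems, performs the analogous energy estimate for differences, and uses the uniform $\mathcal{H}^m$-bound together with Lemma \ref{MTV} to control the nonlinear remainders. The limit $(b_1, u)$ satisfies \eqref{VBE}, \eqref{BCP}-\eqref{FCP}, together with \eqref{positi} and the required $\mathcal{H}^m$-regularity; uniqueness is the $L^2$-version of the same contraction estimate applied to two candidate solutions. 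The main obstacle is the normal-derivative induction for $b_1$: the transport equation for $b_1$ affords no intrinsic $y$-smoothing, so at each level $j$ one must carefully sequence the argument so that the parabolic bound on $\partial_y^{j+1}u$ is obtained before, and then feeds into, the bound on $\partial_y^{j+1}b_1$; executing this inside the non-isotropic spaces $\mathcal{B}^{k_1,k_2}$ while avoiding a net loss of derivatives in the Moser bounds of Lemma \ref{MTV} is the delicate point of the proof.
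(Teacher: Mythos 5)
Your proposal follows essentially the same route as the paper: Picard iteration on the coefficient-frozen linear system (initialized by a Taylor polynomial in $t$ built from the compatibility conditions), tangential space-time energy estimates exploiting the symmetric structure and the parabolic dissipation in $u$, recovery of the normal derivatives of $u$ by algebraically inverting the momentum equation for $\partial_y^2 u$ and of the normal derivatives of $b_1$ directly from the transport equation, with the resulting order-$(m+1)$ source term absorbed by the $u$-dissipation, followed by an $L^2$ contraction for convergence and uniqueness. The one device you do not mention is the paper's use of $\partial_\tau^{\alpha}\partial_t v^{n+1}$ as a multiplier to obtain the pointwise-in-time control of $\partial_y v^{n+1}$ in $\mathcal{B}^{m-1,0}$ (the $L^2_t$ dissipation alone does not give this), but that is a technical refinement within the same strategy.
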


\begin{rem}
 {The requirements of initial time regularity of solutions can be changed into the requirements of space regularity of initial data in \eqref{IID} through the equations \eqref{BLLEE}}.  {This is the reason why we require the Sobolev space index to be $3m$ in \eqref{IID}.}
\end{rem}
To prove Theorem \ref{T3.1}, we will use the Picard iteration  and the fixed point theorem. For this purpose, we first homogenize the boundary and far-field conditions. Precisely, let $\phi(y)$ be a smooth function satisfying $0\leq\phi(y)\leq 1$ and
\begin{align*}
\phi(y)=\left\{
\begin{array}{ll}
0, &0\leq y\leq 1,\\
1, &y\geq 2.
\end{array}
\right.
\end{align*}
Write
\begin{align*}
b_1=b+B(t,x),\qquad u=v+U(t,x)\phi(y),
\end{align*}
and from \eqref{BCP}-\eqref{FCP} we obtain the following  boundary  and far-field conditions,
\begin{align*}
	v|_{y=0}=0,\quad \lim\limits_{y\rightarrow+\infty}(b, v)(t,x,y)=\mathbf0,
\end{align*}
and the  initial data
\begin{align}\label{IV}
	(b, v)(0,x,y)=\Big(b_0(x,y)-B(0,x), u_0(x,y)-U(0,x)\phi(y)\Big).
\end{align}
From \eqref{BLLEE}, it follows that
\begin{align}
\label{NLP}
\left\{
\begin{array}{ll}
\partial_t b+(v+U\phi)\partial_x b-(b+B)\partial_x v+B_x v-\phi U_x b=r_1,\\
\partial_tv-(b+B)\partial_xb+(v+U\phi)\partial_xv-(b+B)\partial_y\big((b+B)\partial_y v+\phi_y Ub\big)-(\phi_{yy} UB+B_x) b+\phi U_x v =r_2,
\end{array}
\right.
\end{align}
with
\begin{align*}
\begin{cases}
	r_1=-B_t-\phi UB_x+\phi BU_x=(1-\phi)(UB_x-BU_x),\\
	r_2=-\partial_x p-\phi U_t+BB_x-\phi^2U U_x+\phi_{yy} UB^2=(1-\phi)U_t+(1-\phi^2)UU_x+\phi_{yy} UB^2,
\end{cases}
\end{align*}
where we have used \eqref{BN1}. Moreover, by the assumptions in \eqref{OUTE}, it holds
\begin{align}\label{est-r}
	\|(r_1, r_2)\|_{\mathcal{H}^m(\Omega_T)}\leq CM^3
\end{align}
for some constant $C>0$.

Next, set
\begin{align}\label{initial-j}
  (b_0^j, v_0^j)(x,y)~:=~(\partial_t^j b, \partial_t^j v)(0,x,y),\quad 0\leq j\leq m.
\end{align}
It follows from the assumptions in Theorem \ref{MTV} that $(b_0^j, v_0^j)(x,y)\in H^m(\Omega)$, which can be derived from the equations \eqref{NLP} and initial values $(b, v)(0,x,y)$ of \eqref{IV} by induction with respect to $j$.  Moreover, the assumptions in Theorem \ref{MTV} imply that there exists a positive constant $M_0>1$, depending only on $M$ and $(b, v)(0,x,y)$, such that
\begin{align}\label{regular-t}
  \sum_{j=0}^{m}\big\|(b_0^j, v_0^j)\big\|_{H^{3m-2j}(\Omega)}\leq M_0.
\end{align}

\subsection{Construction of approximate solution sequence}\label{SUB}
In this subsection, we will construct an approximate solution sequence $\displaystyle \{{\bf v}^n(t,x,y)\}_{n=0}^\infty=\{(b^n, v^n)(t,x,y)\}_{n=0}^\infty$ to the system \eqref{NLP}.

Firstly, denote
\begin{align*}
{\bf v}_0^j(x,y)~:=~(b_0^j, v_0^j)(x,y),
\end{align*}
and define the zero-th approximate solution ${\bf v}^0$ of \eqref{NLP}  as the following,
\begin{align}\label{iteration-0}
{\bf v}^0(t,x,y)=(b^0, v^0)(t,x,y)~:=~\sum_{j=0}^m\frac{t^j}{j!}{\bf v}_0^j(x,y).
\end{align}
It is straightforward  to check that
\begin{align}
{\bf v}^0(t,x,y)\in \mathcal{H}^m(\Omega_{T}).
\end{align}
Moreover, there exists a $0<T_0\leq T$ such that
\begin{align*}
	b^0(t,x,y)+B(t,x)\geq\frac{\delta_0}{2},\quad \forall\ (t,x,y)\in\Omega_{T_0}.
\end{align*}

Next, we construct  ${\bf v}^{n+1}=(b^{n+1}, v^{n+1}), ~n\geq0$ by induction. Precisely, suppose that the $n-$th order approximate solution ${\bf v}^n=(b^n, v^n)\in \mathcal{H}^m(\Omega_{T_n}), n\geq0$ is obtained for some $0<T_n\leq T_0$ 
and satisfies
\begin{align}\label{iteration-n}
	b^n(t,x,y)+B(t,x)\geq\frac{\delta_0}{2},\quad \partial_t^j{\bf v}^n(0,x,y)={\bf v}_0^j(x,y),\qquad \mbox{for}~(t,x,y)\in\Omega_{T_n},~ 0\leq j\leq m.
\end{align}
We define ${\bf v}^{n+1}(t,x,y)=(b^{n+1}, v^{n+1})(t,x,y)$ by solving the following linear initial-boundary value problem, {\it i.e.}, the iteration scheme of nonlinear problem (\ref{NLP}),
\begin{align}\label{iteration-n+1}
\displaystyle \left\{
\begin{array}{ll}
\partial_t b^{n+1}+(v^n+U\phi)\partial_x b^{n+1}-(b^n+B)\partial_x v^{n+1}+B_xv^{n+1}-\phi U_xb^{n+1} =r_1,\\
\partial_t v^{n+1}-(b^n+B)\partial_x b^{n+1}+(v^n+U\phi)\partial_x v^{n+1}-(b^n+B)\partial_y\left((b^n+B)\partial_y v^{n+1}+\phi_y Ub^{n+1}\right)\\
\qquad\qquad\qquad-(\phi_{yy}UB+B_x)b^{n+1}+\phi U_x v^{n+1}=r_2,
\end{array}
\right.
\end{align}
with the initial data
\begin{align}
\label{II}
b^{n+1}(0,x,y)=b_{0}(x,y)-B(0,x),\qquad v^{n+1}(0,x,y)=u_0(x,y)-\phi(y) U(0,x),
\end{align}
and the boundary conditions
\begin{align}
\label{BBB}
v^{n+1}|_{y=0}=0,\qquad\lim_{y\rightarrow+\infty}(b^{n+1}, v^{n+1})(t,x,y)=\mathbf0.
\end{align}

Direct computations show that
\begin{align}\label{initial_n+1}
	\partial_t^j {\bf v}^{n+1}(0,x,y)~=~{\bf v}_0^j(x,y)~\in~H^m(\Omega),\quad 0\leq j\leq m.
\end{align}
Then, it is standard to show  the existence of solutions to  the linearized problem (\ref{iteration-n+1})-(\ref{BBB})  in a time interval $t\in [0, T_{n+1}]$ with $0<T_{n+1}\leq T_n$. Moreover,
\begin{align*}
	{\bf v}^{n+1}=(b^{n+1}, v^{n+1})\in \mathcal{H}^m(\Omega_{T_{n+1}}). 
\end{align*}
We only need to derive the uniform estimates of ${\bf v}^{n+1}$ in $\mathcal{H}^m(\Omega_{T_{n+1}})$, which guarantee that the life-span $T_{n+1}$ of the approximate solution ${\bf v}^{n+1}$ has a strictly positive
lower bound as $n$ goes to infinity. In the mean time,  the positive
lower boundedness of $b^{n+1}+B(t, x)$ can be preserved. That is, there is a $0<T_*\leq T$ such that for any $n\geq0,$
\begin{align*}
	T_{n+1}~\geq~T_*,\quad b^{n+1}(t,x,y)+B(t, x)\geq\frac{\delta_0}{2}\quad\mbox{in}~\Omega_{T_{*}}.
\end{align*}

 Now in order to prove the well-posedness of  the nonlinear problem  (\ref{VBE}), (\ref{BCP})-(\ref{FCP}),
 we will show that the functional mapping from ${\bf v}^{n}$ to ${\bf v}^{n+1}$ is a contraction mapping in $L^2$ sense, cf.  \cite{M}. Therefore, it follows that the approximate solution sequence $\{{\bf v}^{n}(t,x,y)\}_{n=0}^{\infty}$ is a Cauchy sequence in $\mathcal{H}^k(\Omega_{T_*}) (k<m)$, and it converges strongly to some function ${\bf v}(t,x,y)=(b, v)(t,x,y)$ in $\mathcal{H}^k(\Omega_{T_*})$. Then, it is straightforward to verify that $(b+B,v+U\phi)$ is a unique solution to (\ref{VBE}), (\ref{BCP})-(\ref{FCP}) by letting $n\rightarrow +\infty$ in (\ref{iteration-n+1}).

For this purpose,  we define the closed set $E_T(M_\ast)$ in $\mathcal{H}^{m}(\Omega_T)$,
\begin{align*}
E_T(M_\ast)=\left\{{\bf f}(t,x,y)=(f_1, f_2)(t,x,y)\in\mathcal{H}^m(\Omega_T): \|{\bf f}\|_{\mathcal{H}^m(\Omega_T)}\leq M_\ast,~f_1+B(t,x)\geq\frac{\delta_0}{2}\right\},
\end{align*}
where $M_\ast$ is a positive constant to be determined later. As mentioned above, suppose ${\bf v}^{n}=(b^n,v^n)\in E_T(M_\ast)$ for some $T$ independent of $n$, we will prove ${\bf v}^{n+1}=(b^{n+1}, v^{n+1})\in E_T(M_\ast)$. That is, we prove the mapping
\begin{align}\label{def-map}
\Pi:\quad {\bf v}^{n}\mapsto {\bf v}^{n+1}
\end{align}
defined by solving the linear iteration problem (\ref{iteration-n+1})-(\ref{BBB}) is a mapping from $E_T(M_\ast)$ to $E_T(M_\ast)$ itself for some small $T$ independent of $n$. Moreover, we also prove the mapping $\Pi$ is a contraction mapping in $L^2$.

Below, we will focus on the uniform energy estimates of ${\bf v}^{n+1}$ defined in (\ref{iteration-n+1}) by assuming that ${\bf v}^{n}\in E_{T_n}$ and the proof is divided into three parts which are given in the next subsections \ref{SUB1} to \ref{SUB3} respectively. Before that, we introduce some notations. Denote by the notation $[\cdot, \cdot]$ the commutator, and by  $\mathcal{P}(\cdot)$   a generic polynomial functional  which may vary from line to line.

\subsection{$L^2$-estimate}\label{SUB1}
In this subsection, we will derive the $L^2$ estimate of solutions to (\ref{iteration-n+1}). Multiplying the first equation in $(\ref{iteration-n+1})$ by $b^{n+1}$ and multiplying the second equation in $(\ref{iteration-n+1})$ by $v^{n+1}$ respectively, adding them together and integrating the resulting equation over $\Omega$, we obtain
\begin{align}\label{est-L2-0}
\begin{aligned}
&\frac12\frac{d}{dt}\int_{\Omega}(b^{n+1})^2+(v^{n+1})^2dxdy\\
&+\int_{\Omega}b^{n+1}\left\{(v^n+U\phi)\partial_x b^{n+1}-(b^n+B)\partial_x v^{n+1}+B_xv^{n+1}-\phi U_x b^{n+1} \right\}dxdy\\
&
+\int_{\Omega}v^{n+1}\left\{-(b^n+B)\partial_xb^{n+1}+(v^n+U\phi)\partial_x v^{n+1}-(b^n+B)\partial_y\left[(b^n+B)\partial_y v^{n+1}+\phi_y U b^{n+1}\right]\right.\\
&\left.\qquad\qquad\quad-(\phi_{yy}UB+B_x)b^{n+1}+\phi U_xv^{n+1}\right\}dxdy\\
&=\int_{\Omega}r_1\cdot b^{n+1}+r_2\cdot v^{n+1}dxdy.
\end{aligned}\end{align}
Note that
\begin{align}\label{est-L2-1}
\begin{aligned}
&\left|\int_{\Omega}b^{n+1}\big[(v^n+U\phi)\partial_x b^{n+1}-(b^n+B)\partial_x v^{n+1} \big]+v^{n+1}\big[-(b^n+B)\partial_x b^{n+1}+(v^n+U\phi)\partial_x v^{n+1}\big]dxdy\right|\\
&=\left|\int_{\Omega}-\partial_x\left(v^n+U\phi\right)\frac{(b^{n+1})^2+(v^{n+1})^2}{2}+\partial_x(b^n+B) b^{n+1}v^{n+1} dxdy\right|\\
&\lesssim \left(1+\|{\bf v}^n(t)\|_{\mathcal{H}^3}\right)\int_{\Omega}|{\bf v}^{n+1}(t)|^2dxdy,
\end{aligned}\end{align}
where the Sobolev embedding inequality is used in the last inequality.

Then, by the boundary condition $(\ref{BBB})$ and integration by parts, we have
\begin{align}\label{est-L2-2}
\begin{aligned}
&-\int_{\Omega}v^{n+1}\cdot(b^n+B)\partial_y\big[(b^n+B)\partial_y v^{n+1}+\phi_y U b^{n+1}\big]dxdy\\
&=\int_{\Omega}\big[(b^n+B)\partial_y v^{n+1}+\partial_y b^n v^{n+1}\big]\cdot\big[(b^n+B)\partial_y v^{n+1}+\phi_y U b^{n+1}\big]dxdy\\
&\geq \frac12\int_{\Omega}\big[(b^n+B)\partial_y v^{n+1}\big]^2dxdy-\frac12\int_{\Omega}(\partial_y b^n v^{n+1})^2+(\phi_y Ub^{n+1})^2dxdy\\
&\geq \frac{\delta_0^2}{8}\int_{\Omega}\big(\partial_y v^{n+1}(t)\big)^2dxdy-C\left(1+\|{\bf v}^n(t)\|_{\mathcal{H}^3}^2\right)\int_{\Omega}|{\bf v}^{n+1}(t)|^2dxdy,
\end{aligned}\end{align}
where the following elementary inequality is used in the  first inequality:
\begin{align}\label{useful-ineq}
(a+b)(a+c)\geq\frac{1}{2}(a^2-b^2-c^2),
\end{align}
and the lower boundedness \eqref{iteration-n} of $b^n+B$   is used  in the second inequality.

Next, it is straightforward to show
\begin{align}\label{est-L2-3}
	&\left|\int_{\Omega}b^{n+1}\cdot(B_xv^{n+1}-\phi U_x b^{n+1})+ v^{n+1}\cdot\big[-(\phi_{yy}UB+B_x)b^{n+1}+\phi U_xv^{n+1}\big]dxdy \right| \notag \\
	&\lesssim \int_{\Omega}|{\bf v}^{n+1}(t)|^2dxdy.
\end{align}
Consequently, plugging \eqref{est-L2-1}-\eqref{est-L2-3} into \eqref{est-L2-0} yields
\begin{align}\label{est-L2}
\begin{aligned}
&\frac{d}{dt}\left\|{\bf v}^{n+1}(t)\right\|_{L^2(\Omega)}^2+\left\|\partial_y v^{n+1}(t)\right\|_{L^2(\Omega)}^2\\
&\lesssim \big(1+\|{\bf v}^n(t)\|^2_{\mathcal{H}^3}\big)\|{\bf v}^{n+1}(t)\|^2_{L^2(\Omega)}+\int_{\Omega}r_1\cdot{b}^{n+1}+r_2\cdot{v}^{n+1}dxdy.
\end{aligned}\end{align}

\subsection{Estimates on tangential derivatives}\label{SUB2}
Applying the tangential derivatives $\partial_\tau^\alpha$ to the first and second equations in $(\ref{iteration-n+1})$ with $|
\alpha|\leq m$ and $m\geq4$, multiplying them by $\partial_\tau^\alpha b^{n+1}$ and $\partial_\tau^\alpha v^{n+1}$ respectively, adding them together and integrating the resulting equation over $\Omega$ yield
\begin{align}\label{est-tan0}
\begin{aligned}	
&\frac12\frac{d}{dt}\int_{\Omega}|\partial_\tau^\alpha{\bf v}^{n+1}|^2(t)dxdy\\
&\quad+\int_{\Omega}\partial_{\tau}^{\alpha}\left\{(v^n+U\phi)\partial_xb^{n+1}-(b^n+B)\partial_xv^{n+1}+B_xv^{n+1}-\phi U_x b^{n+1}\right\}\partial_\tau^\alpha b^{n+1}dxdy\\
&\quad+\int_{\Omega}\partial_{\tau}^{\alpha}\left\{-(b^n+B)\partial_xb^{n+1}+(v^n+U\phi)\partial_xv^{n+1}-(b^n+B)\partial_y\left[(b^n+B)\partial_y v^{n+1}+\phi_y U b^{n+1}\right]\right.\\
&\left.\qquad\qquad\quad-(\phi_{yy}UB+B_x)b^{n+1}+\phi U_xv^{n+1}\right\}
\partial_{\tau}^{\alpha}v^{n+1}dxdy\\
&=\int_{\Omega}\partial_\tau^\alpha r_1\cdot\partial_\tau^\alpha b^{n+1}+\partial_\tau^\alpha r_2\cdot\partial_\tau^\alpha v^{n+1}dxdy .
\end{aligned}\end{align}
Notice that
\begin{align}\label{est-tan2}\begin{aligned}
&\int_{\Omega}\partial_{\tau}^{\alpha}\left((v^n+U\phi)\partial_x b^{n+1}-(b^n+B)\partial_x v^{n+1} \right)\partial_\tau^\alpha b^{n+1}\\
&\qquad+\partial_{\tau}^{\alpha}\left(-(b^n+B)\partial_x b^{n+1}+(v^n+U\phi)\partial_x v^{n+1} \right)\partial_\tau^\alpha v^{n+1}dxdy\\
&=\int_{\Omega}-\partial_x (v^n+U\phi)\frac{|\partial_\tau^\alpha{\bf v}^{n+1} |^2}{2}+\partial_x(b^n+B)\left(\partial_\tau^\alpha b^{n+1}\cdot\partial_\tau^\alpha v^{n+1}\right)dxdy\\
&\quad+\int_{\Omega}\left(\left[\partial_\tau^\alpha, v^n+U\phi\right]\partial_x b^{n+1}-\left[\partial_\tau^\alpha, b^n+B\right]\partial_x v^{n+1}\right)\cdot\partial_{\tau}^{\alpha}b^{n+1}\\
&\qquad\qquad+\left(-\left[\partial_\tau^\alpha, b^n+B\right]\partial_x b^{n+1}+\left[\partial_\tau^\alpha, v^n+U\phi\right]\partial_x v^{n+1}\right)\cdot\partial_{\tau}^{\alpha}v^{n+1}dxdy.
\end{aligned}\end{align}
It follows, from \eqref{Moser} and $m\geq4$, that
\begin{align}\label{est-commutator1}
\begin{aligned}
	\left\|\left[\partial_\tau^\alpha, v^n+U\phi\right]\partial_x b^{n+1} \right\|_{L^2(\Omega)}\lesssim &\sum_{|\beta|=1}\left\|\partial_\tau^\beta(v^n+U\phi)(t)\right\|_{\mathcal{H}^{m-1}}\left\|\partial_xb^{n+1}(t)\right\|_{\mathcal{H}^{m-1}}\\
	\lesssim &(1+\|v^n(t)\|_{\mathcal{H}^m} )\|b^{n+1}(t)\|_{\mathcal{H}^m}.
\end{aligned}\end{align}
By using \eqref{Moser1} and \eqref{OUTE}, we have
\begin{align}\label{est-commutator2}
\begin{aligned}
	\left\|\left[\partial_\tau^\alpha, b^n+B\right]\partial_x v^{n+1} \right\|_{L^2(\Omega)}\lesssim &\left(\sum_{|\beta|=1}\left\|\partial_\tau^\beta b^n(t)\right\|_{\mathcal{H}^{m-1}}+\left\|\partial_\tau^\beta B(t)\right\|_{\mathcal{C}^{m-1}}\right)\left\|\partial_x v^{n+1}(t)\right\|_{\mathcal{H}^{m-1}}\\
	\lesssim &(1+\|b^n(t)\|_{\mathcal{H}^m} )\|v^{n+1}(t)\|_{\mathcal{H}^m}.
\end{aligned}\end{align}
Also,
\begin{align*}
	&\left\|\left[\partial_\tau^\alpha, b^n+B\right]\partial_x b^{n+1} \right\|_{L^2(\Omega)}+\left\|\left[\partial_\tau^\alpha, v^n+U\phi\right]\partial_x v^{n+1} \right\|_{L^2(\Omega)}\lesssim (1+\|{\bf v}^n(t)\|_{\mathcal{H}^m} )\|{\bf v}^{n+1}(t)\|_{\mathcal{H}^m}.
\end{align*}
Consequently, it leads to
\begin{align}\label{est-tan1}
\begin{aligned}
&\Big|\int_{\Omega}\partial_{\tau}^{\alpha}\left((v^n+U\phi)\partial_x b^{n+1}-(b^n+B)\partial_x v^{n+1} \right)\partial_\tau^\alpha b^{n+1}\\
&\quad +\partial_{\tau}^{\alpha}\left(-(b^n+B)\partial_x b^{n+1}+(v^n+U\phi)\partial_x v^{n+1} \right)\partial_\tau^\alpha v^{n+1}dxdy\Big|\\
&\lesssim (1+\|{\bf v}^n(t)\|_{\mathcal{H}^m} )\|{\bf v}^{n+1}(t)\|_{\mathcal{H}^m}^2.
\end{aligned}\end{align}

Next,
\begin{align}\label{est-tan2-0}\begin{aligned}
&-\int_{\Omega}\partial_{\tau}^{\alpha}\left\{(b^n+B)\partial_y\left[(b^n+B)\partial_y v^{n+1}+\phi_y U b^{n+1}\right]\right\}\cdot\partial_{\tau}^{\alpha}v^{n+1}dxdy\\
&=-\int_{\Omega}(b^n+B)\partial_{\tau}^{\alpha}\partial_y\left[(b^n+B)\partial_y v^{n+1}+\phi_y U b^{n+1}\right]\cdot\partial_{\tau}^{\alpha}v^{n+1}dxdy\\
&\quad-\sum_{\beta\leq\alpha, |\beta|=1}\int_{\Omega}C_\alpha^\beta\partial_\tau^\beta (b^n+B)\partial_{\tau}^{\alpha-\beta}\partial_y\left[(b^n+B)\partial_y v^{n+1}+\phi_y U b^{n+1}\right]\cdot\partial_{\tau}^{\alpha}v^{n+1}dxdy\\
&\quad-\sum_{\beta\leq\alpha, |\beta|\geq2}\int_{\Omega}C_\alpha^\beta\partial_\tau^\beta (b^n+B)\partial_{\tau}^{\alpha-\beta}\partial_y\left[(b^n+B)\partial_y v^{n+1}+\phi_y U b^{n+1}\right]\cdot\partial_{\tau}^{\alpha}v^{n+1}dxdy\\
&\triangleq I_1+I_2+I_3.
\end{aligned}\end{align}
By integration by parts and the boundary condition $\partial_\tau^\alpha v^{n+1}|_{y=0}=0$,
\begin{align*}\begin{aligned}
I_1&=\int_{\Omega}\partial_{\tau}^{\alpha}\left[(b^n+B)\partial_y v^{n+1}+\phi_y Ub^{n+1}\right]\cdot\partial_y\left[(b^n+B)\partial_{\tau}^{\alpha}v^{n+1}\right]dxdy\\
&=\int_{\Omega}\left\{(b^n+B)\partial_\tau^\alpha\partial_y v^{n+1}+\left[\partial_{\tau}^{\alpha}, (b^n+B)\right]\partial_y v^{n+1}+\partial_\tau^\alpha(\phi_y Ub^{n+1})\right\}\\
&\qquad\quad\cdot\left[(b^n+B)\partial_{\tau}^{\alpha}\partial_y v^{n+1}+\partial_y b^n \partial_{\tau}^{\alpha} v^{n+1}\right]dxdy\\
&\geq\frac12\int_{\Omega}\left[(b^n+B)\partial_{\tau}^{\alpha}\partial_yv^{n+1}\right]^2dxdy\\
&\quad-\frac12\int_{\Omega}\left\{\left(\left[\partial_\tau^\alpha, (b^n+B)\right]\partial_y v^{n+1}+\partial^\alpha_\tau\left(\phi_y Ub^{n+1}\right) \right)^2+\left(\partial_y b^n \partial_{\tau}^{\alpha} v^{n+1}\right)^2\right\}dxdy,
\end{aligned}\end{align*}
where \eqref{useful-ineq} is used in the  last inequality.

Similar to \eqref{est-commutator1}, we obtain
\begin{align}\label{est-commutator3}
	\left\|\partial^\alpha_\tau\left(\phi_y Ub^{n+1}\right) \right\|_{L^2(\Omega)}\lesssim \|b^{n+1}(t)\|_{\mathcal{H}^m}
\end{align}
due to \eqref{Moser}.

Then combining the above two inequalities together and using  \eqref{iteration-n}, we obtain the following estimate according to the similar argument in \eqref{est-commutator1},
\begin{align}\label{est-tan2-1}\begin{aligned}
I_1\geq & \frac{\delta_0^2}{8}\left\|\partial_{\tau}^{\alpha}\partial_y v^{n+1}\right\|_{L^2(\Omega)}^2-C\left(1+\|b^n(t)\|_{\mathcal{H}^m}^2\right)\|{\bf v}^{n+1}(t)\|_{\mathcal{H}^m}^2.
\end{aligned}\end{align}
Integration by parts yields that
\begin{align*}
I_2= &	\sum_{\beta\leq\alpha, |\beta|=1}\int_{\Omega}C_\alpha^\beta\partial_{\tau}^{\alpha-\beta}\left[(b^n+B)\partial_y v^{n+1}+\phi_y U b^{n+1}\right]\cdot\left[\partial_\tau^\beta (b^n+B)\partial_{\tau}^{\alpha}\partial_y v^{n+1}+\partial_\tau^\beta\partial_y b^n\partial_{\tau}^{\alpha}v^{n+1}\right]dxdy.
\end{align*}
Note that, for $\beta\leq\alpha$ and $|\beta|=1$,
\begin{align*}
	\left\| \partial_{\tau}^{\alpha-\beta}\left[(b^n+B)\partial_y v^{n+1}+\phi_y U b^{n+1}\right]\right\|_{L^2(\Omega)}
	&\lesssim\left(1+\|b^n(t)\|_{\mathcal{H}^{m-1}}\right)\|\partial_y v^{n+1} (t)\|_{\mathcal{H}^{m-1}}+\|b^{n+1} (t)\|_{\mathcal{H}^{m-1}}\\
	&\lesssim\left(1+\|b^n(t)\|_{\mathcal{H}^{m-1}}\right)\|{\bf v}^{n+1} (t)\|_{\mathcal{H}^{m}}.
\end{align*}
Therefore, it holds
\begin{align}\label{est-tan2-2}
	\begin{aligned}
		|I_2|\lesssim &\sum_{\beta\leq\alpha, |\beta|=1}\left(1+\|b^n(t)\|_{\mathcal{H}^{m-1}}\right)\|{\bf v}^{n+1} (t)\|_{\mathcal{H}^{m}}\cdot\left(\|\partial_\tau^\beta (b^n+B)\|_{L^\infty(\Omega)}\|\partial_{\tau}^{\alpha}\partial_y v^{n+1}\|_{L^2(\Omega)}\right.\\
		&\qquad\qquad\qquad\left.+\|\partial_\tau^\beta\partial_y b^n\|_{L^\infty}\|\partial_{\tau}^{\alpha}v^{n+1}\|_{L^2(\Omega)} \right)\\
		\leq &\frac{\delta_0^2}{16}\left\|\partial_{\tau}^{\alpha}\partial_y v^{n+1}\right\|_{L^2(\Omega)}^2+C\left(1+\|b^n(t)\|_{\mathcal{H}^{m}}^4\right)\|{\bf v}^{n+1} (t)\|_{\mathcal{H}^{m}}^2
	\end{aligned}
\end{align}
provided $m\geq4$.

For $I_3$, since $\beta\leq\alpha$ and $|\beta|\geq2$, we write
\begin{align*}
	&\partial_\tau^\beta (b^n+B) \partial_{\tau}^{\alpha-\beta}\partial_y\left[(b^n+B)\partial_y v^{n+1}+\phi_y U b^{n+1}\right]\\
	&=\partial_\tau^{\beta-\gamma}(\partial_\tau^\gamma b^n+\partial_\tau^\gamma B )\cdot \partial_{\tau}^{\alpha-\beta}\partial_y\left[(b^n+B)\partial_y v^{n+1}+\phi_y U b^{n+1}\right],
\end{align*}
for some $\gamma\leq\beta, |\gamma|=2$, and then
\begin{align*}
	&\left\|\partial_\tau^\beta (b^n+B) \partial_{\tau}^{\alpha-\beta}\partial_y\left[(b^n+B)\partial_y v^{n+1}+\phi_y U b^{n+1}\right]\right\|_{L^2(\Omega)}\\
	&\lesssim\left(\|\partial_\tau^\gamma b^n(t)\|_{\mathcal{H}^{m-2}}+\|\partial_\tau^\gamma B(t)\|_{\mathcal{C}^{m-2}}\right)\cdot \left\|\partial_y\left[(b^n+B)\partial_y v^{n+1}+\phi_y U b^{n+1}\right](t)\right\|_{\mathcal{H}^{m-2}}\\
	&\lesssim\left(1+\|b^n(t)\|_{\mathcal{H}^{m}}\right)\cdot\left(\|(b^n+B)\partial_y v^{n+1}\|_{\mathcal{H}^{m-1}}+\|\phi_y U b^{n+1}\|_{\mathcal{H}^{m-1}}\right)\\
	&\lesssim \left(1+\|b^n(t)\|_{\mathcal{H}^{m}}^2\right)\|{\bf v}^{n+1} (t)\|_{\mathcal{H}^{m}},
\end{align*}
provided $m\geq4$.  As a consequence,
\begin{align}\label{est-tan2-3}
	\begin{aligned}
		|I_3|\lesssim \left(1+\|b^n(t)\|_{\mathcal{H}^{m}}^2\right)\|{\bf v}^{n+1} (t)\|_{\mathcal{H}^{m}}\cdot\|\partial_{\tau}^{\alpha}v^{n+1}\|_{L^2(\Omega)}\lesssim \left(1+\|b^n(t)\|_{\mathcal{H}^{m}}^2\right)\|{\bf v}^{n+1} (t)\|_{\mathcal{H}^{m}}^2.
			\end{aligned}
\end{align}
Substituting \eqref{est-tan2-1}, \eqref{est-tan2-2} and \eqref{est-tan2-3} into \eqref{est-tan2-0}, we arrive at
\begin{align}\label{est-tan2}\begin{aligned}
&-\int_{\Omega}\partial_{\tau}^{\alpha}\left\{(b^n+B)\partial_y\left[(b^n+B)\partial_y v^{n+1}+\phi_y U b^{n+1}\right]\right\}\cdot\partial_{\tau}^{\alpha}v^{n+1}dxdy\\
&\geq \frac{\delta_0^2}{16}\left\|\partial_{\tau}^{\alpha}\partial_y v^{n+1}\right\|_{L^2(\Omega)}^2-C\left(1+\|b^n(t)\|_{\mathcal{H}^m}^4\right)\|{\bf v}^{n+1}(t)\|_{\mathcal{H}^m}^2.
\end{aligned}\end{align}

We now turn to the estimates of the other terms in \eqref{est-tan0}.
By using \eqref{Moser} and \eqref{Moser1}, it follows that
\begin{align}\label{est-commutator4}
\begin{aligned}
	\left\|\partial_\tau^\alpha(B_xv^{n+1}-\phi U_x b^{n+1})\right\|_{L^2(\Omega)}&\lesssim \|{\bf v}^{n+1}(t)\|_{\mathcal{H}^m},\\
	\left\|\partial_\tau^\alpha\left[-(\phi_{yy}UB+B_x)b^{n+1}+\phi U_xv^{n+1}\right]\right\|_{L^2(\Omega)}&\lesssim \|{\bf v}^{n+1}(t)\|_{\mathcal{H}^m},
\end{aligned}\end{align}
then,
\begin{align}\label{est-tan3}
\begin{aligned}
&\left|\int_{\Omega}\partial_\tau^\alpha(B_xv^{n+1}-\phi U_x b^{n+1})\cdot\partial_\tau^\alpha b^{n+1}+ \partial_\tau^\alpha\left[-(\phi_{yy}UB+B_x)b^{n+1}+\phi U_xv^{n+1}\right]\cdot\partial_\tau^\alpha v^{n+1} dxdy\right|\\
&\lesssim \|{\bf v}^{n+1}(t)\|_{\mathcal{H}^m}^2.	
\end{aligned}\end{align}
Substituting \eqref{est-tan1}, \eqref{est-tan2} and \eqref{est-tan3} into \eqref{est-tan0} yields
\begin{align}\label{E3}
\begin{aligned}
&\frac{d}{dt}\left\|\partial_\tau^\alpha {\bf v}^{n+1}(t)\right\|_{L^2(\Omega)}^2+\left\|\partial_\tau^\alpha\partial_y v^{n+1}(t)\right\|_{L^2(\Omega)}^2\\
&\lesssim \big(1+\|{\bf v}^n(t)\|_{\mathcal{H}^m}^4\big)\|{\bf v}^{n+1}(t)\|^2_{\mathcal{H}^m}+\|\partial_\tau^\alpha r_1(t)\|^2_{L^2(\Omega)}+\|\partial_\tau^\alpha r_2(t)\|^2_{L^2(\Omega)}.
\end{aligned}\end{align}
Hence, summing the above estimates over $\alpha, |\alpha|\leq m$ gives
\begin{align}\label{est-tan}
\begin{aligned}
&\frac{d}{dt}\left\|{\bf v}^{n+1}(t)\right\|_{\mathcal{B} ^{m,0}}^2+\left\|\partial_y v^{n+1}(t)\right\|_{\mathcal{B} ^{m,0}}^2\lesssim \big(1+\|{\bf v}^n(t)\|_{\mathcal{H}^m}^4\big)\|{\bf v}^{n+1}(t)\|^2_{\mathcal{H}^m}+\|r_1(t)\|^2_{\mathcal{B} ^{m,0}}+\| r_2(t)\|^2_{\mathcal{B} ^{m,0}}.
\end{aligned}\end{align}

\subsection{Estimates on normal derivatives}
\label{SUB3}
In this subsection, we will derive the estimates on the normal derivatives in the subsequent three steps.

{\bf Step I.} Applying the operator $\partial_\tau^{\alpha} (|\alpha|\leq m-1)$ to the second equation in $(\ref{iteration-n+1})$, multiplying the resulting equation by $\partial_\tau^{\alpha}\partial_tv^{n+1}$  and integrating it over $\Omega$ lead to
\begin{align}\label{est-v_t0}
\begin{aligned}
&\int_{\Omega}(\partial_{\tau}^{\alpha}\partial_t v^{n+1})^2dxdy +\int_{\Omega}\partial_{\tau}^{\alpha}\left\{-(b^n+B)\partial_xb^{n+1}+(v^n+U\phi)\partial_xv^{n+1}-(\phi_{yy}UB+B_x)b^{n+1}+\phi U_xv^{n+1}\right.\\
&\left.\qquad\qquad-(b^n+B)\partial_y\left[(b^n+B)\partial_y v^{n+1}+\phi_y U b^{n+1}\right]\right\}
\partial_{\tau}^{\alpha}\partial_t v^{n+1}dxdy=\int_{\Omega}\partial_\tau^\alpha  r_2\cdot\partial_\tau^\alpha\partial_t v^{n+1}dxdy .
\end{aligned}\end{align}
Since $|\alpha|\leq m-1$ and $m\geq4$, by the similar arguments to those of \eqref{est-commutator1} and \eqref{est-commutator2}, one has
\begin{align*}
	&\left\|\partial_{\tau}^{\alpha}\left[-(b^n+B)\partial_xb^{n+1}+(v^n+U\phi)\partial_xv^{n+1}-(\phi_{yy}UB+B_x)b^{n+1}+\phi U_xv^{n+1}\right] \right\|_{L^2(\Omega)}\\
	&\lesssim \left(1+\|{\bf v}^n(t)\|_{\mathcal{H}^{m-1}}\right)\|\partial_x{\bf v}^{n+1}(t)\|_{\mathcal{H}^{m-1}}+\|{\bf v}^{n+1}(t)\|_{\mathcal{H}^{m-1}}\\
	&\lesssim \left(1+\|{\bf v}^n(t)\|_{\mathcal{H}^{m-1}}\right)\|{\bf v}^{n+1}(t)\|_{\mathcal{H}^{m}}.
\end{align*}
Then
\begin{align}\label{est-v_t1}	
\begin{aligned}
&\left|\int_{\Omega}\partial_{\tau}^{\alpha}\left[-(b^n+B)\partial_xb^{n+1}+(v^n+U\phi)\partial_xv^{n+1}-(\phi_{yy}UB+B_x)b^{n+1}+\phi U_xv^{n+1}\right] \cdot\partial_{\tau}^{\alpha}\partial_tv^{n+1}dxdy\right|\\
&\leq\frac{1}{6} \|\partial_\tau^\alpha\partial_t v^{n+1}(t)\|_{L^2(\Omega)}^2 +C \left(1+\|{\bf v}^n(t)\|_{\mathcal{H}^{m-1}}^2\right)\|{\bf v}^{n+1}(t)\|_{\mathcal{H}^{m}}^2.
\end{aligned}\end{align}

Similarly to  \eqref{est-tan2-0}, we have
\begin{align}\label{est-v_t2-0}\begin{aligned}
&-\int_{\Omega}\partial_{\tau}^{\alpha}\left\{(b^n+B)\partial_y\left[(b^n+B)\partial_y v^{n+1}+\phi_y U b^{n+1}\right]\right\}\cdot\partial_{\tau}^{\alpha}\partial_t v^{n+1}dxdy\\
&=-\int_{\Omega}(b^n+B)\partial_{\tau}^{\alpha}\partial_y\left[(b^n+B)\partial_y v^{n+1}+\phi_y U b^{n+1}\right]\cdot\partial_{\tau}^{\alpha}\partial_t v^{n+1}dxdy\\
&\quad-\sum_{\beta\leq\alpha, |\beta|\geq1}\int_{\Omega}C_\alpha^\beta\partial_\tau^\beta (b^n+B)\partial_{\tau}^{\alpha-\beta}\partial_y\left[(b^n+B)\partial_y v^{n+1}+\phi_y U b^{n+1}\right]\cdot\partial_{\tau}^{\alpha}\partial_t v^{n+1}dxdy\\
&=
J_1+J_2.
\end{aligned}\end{align}
By integration by parts and the boundary condition $\partial_\tau^\alpha\partial_t v^{n+1}|_{y=0}=0$,
\begin{align}\label{est-J_1-0}
\begin{aligned}
J_1&=\int_{\Omega}\partial_{\tau}^{\alpha}\left[(b^n+B)\partial_y v^{n+1}+\phi_y Ub^{n+1}\right]\cdot\left[(b^n+B)\partial_y \partial_{\tau}^{\alpha}\partial_t v^{n+1}+\partial_y b^n \partial_{\tau}^{\alpha}\partial_t v^{n+1}\right]dxdy\\
&=\int_{\Omega}\partial_{\tau}^{\alpha}\left[(b^n+B)\partial_y v^{n+1}+\phi_y Ub^{n+1}\right]\cdot(b^n+B)\partial_t \partial_{\tau}^{\alpha}\partial_y v^{n+1}dxdy\\
&\quad+\int_{\Omega}\partial_{\tau}^{\alpha}\left[(b^n+B)\partial_y v^{n+1}+\phi_y Ub^{n+1}\right]\cdot \partial_y b^n \partial_{\tau}^{\alpha}\partial_t v^{n+1}dxdy\\
&
=J_1^1+J_1^2.
\end{aligned}\end{align}
Here
\begin{align*}\begin{aligned}
J_1^1&=\int_{\Omega}\left\{(b^n+B)\partial_\tau^\alpha\partial_y v^{n+1}+\left[\partial_{\tau}^{\alpha}, (b^n+B)\right]\partial_y v^{n+1}+\partial_\tau^\alpha(\phi_y Ub^{n+1})\right\}\cdot (b^n+B)\partial_t \partial_{\tau}^{\alpha}\partial_y v^{n+1} dxdy\\
&=\frac12\frac{d}{dt}\int_{\Omega}\left[(b^n+B)\partial_{\tau}^{\alpha}\partial_y v^{n+1}\right]^2dxdy-\frac12\int_{\Omega}\partial_t(b^n+B)^2\left(\partial_{\tau}^{\alpha}\partial_y v^{n+1}\right)^2dxdy\\
&\quad+\int_{\Omega}(b^n+B)\partial_y \partial_{\tau}^{\alpha}\partial_t v^{n+1}\cdot\left\{\left[\partial_\tau^\alpha, (b^n+B)\right]\partial_y v^{n+1}+\partial^\alpha_\tau(\phi_y Ub^{n+1}) \right\}dxdy\\
&\geq \frac12\frac{d}{dt}\int_{\Omega}\left[(b^n+B)\partial_{\tau}^{\alpha}\partial_y v^{n+1}\right]^2dxdy-\frac12\|\partial_t(b^n+B)^2\|_{L^\infty(\Omega)}\left\|\partial_{\tau}^{\alpha}\partial_y v^{n+1}\right\|_{L^2(\Omega)}^2\\
&\quad-\|b^n+B\|_{L^\infty(\Omega)}\left\|\partial_y\partial_{\tau}^{\alpha}\partial_t v^{n+1}\right\|_{L^2(\Omega)}\cdot\left\| \left[\partial_\tau^\alpha, (b^n+B)\right]\partial_y v^{n+1}+\partial^\alpha_\tau(\phi_y Ub^{n+1})\right\|_{L^2(\Omega)}.
\end{aligned}\end{align*}
Similar to \eqref{est-commutator2} and \eqref{est-commutator3}, it holds, for $|\alpha|\leq m-1$, that
\begin{align*}
	\left\| \left[\partial_\tau^\alpha, (b^n+B)\right]\partial_y v^{n+1}+\partial^\alpha_\tau(\phi_y Ub^{n+1})\right\|_{L^2(\Omega)}&\lesssim (1+\|b^n(t)\|_{\mathcal{H}^{m-1}} )\|\partial_y v^{n+1}(t)\|_{\mathcal{H}^{m-2}}+\|b^{n+1}(t)\|_{\mathcal{H}^{m-1}}\\
	&\lesssim (1+\|b^n(t)\|_{\mathcal{H}^{m-1}} )\|{\bf v}^{n+1}(t)\|_{\mathcal{H}^{m-1}}.
\end{align*}
Consequently, for $|\alpha|\leq m-1$, it follows that
\begin{align}\label{est-J_1-1}\begin{aligned}
J_1^1&\geq \frac12\frac{d}{dt}\int_{\Omega}\left[(b^n+B)\partial_{\tau}^{\alpha}\partial_y v^{n+1}\right]^2dxdy-\frac{1}{8}\left\|\partial_y\partial_{\tau}^{\alpha}\partial_t v^{n+1}\right\|_{L^2(\Omega)}^2\\
&\quad-C (1+\|b^n(t)\|_{\mathcal{H}^{m-1}}^4 )\|{\bf v}^{n+1}(t)\|_{\mathcal{H}^m}^2.
\end{aligned}\end{align}
For $J_1^2$, notice that, for $|\alpha|\leq m-1$,
\begin{align*}
	\left\| \partial_{\tau}^{\alpha}\left[(b^n+B)\partial_y v^{n+1}+\phi_y Ub^{n+1}\right]\right\|_{L^2(\Omega)}&\lesssim (1+\|b^n(t)\|_{\mathcal{H}^{m-1}} )\|\partial_y v^{n+1}(t)\|_{\mathcal{H}^{m-1}}+\|b^{n+1}(t)\|_{\mathcal{H}^{m-1}}\\
	&\lesssim (1+\|b^n(t)\|_{\mathcal{H}^{m-1}} )\|{\bf v}^{n+1}(t)\|_{\mathcal{H}^{m}},
\end{align*}
which implies that
\begin{align}\label{est-J_1-2}
\begin{aligned}
	|J_1^2|\lesssim &(1+\|b^n(t)\|_{\mathcal{H}^{m-1}} )\|{\bf v}^{n+1}(t)\|_{\mathcal{H}^{m}}\cdot \|\partial_y b^n(t)\|_{L^\infty(\Omega)}\|\partial_\tau^\alpha\partial_t v^{n+1}(t)\|_{L^2(\Omega)}\\
	\leq& \frac{1}{6} \|\partial_\tau^\alpha\partial_t v^{n+1}(t)\|_{L^2(\Omega)}^2 +C(1+\|b^n(t)\|_{\mathcal{H}^{m-1}}^4 )\|{\bf v}^{n+1}(t)\|_{\mathcal{H}^{m}}^2,
\end{aligned}\end{align}
if $m\geq4$. Therefore, substituting \eqref{est-J_1-1} and \eqref{est-J_1-2} into \eqref{est-J_1-0} gives
\begin{align}\label{est-J_1}\begin{aligned}
J_1 &\geq \frac12\frac{d}{dt}\int_{\Omega}\left[(b^n+B)\partial_{\tau}^{\alpha}\partial_y v^{n+1}\right]^2dxdy-\frac{1}{8}\left\|\partial_y\partial_{\tau}^{\alpha}\partial_t v^{n+1}\right\|_{L^2(\Omega)}^2-\frac{1}{6} \|\partial_\tau^\alpha\partial_t v^{n+1}(t)\|_{L^2(\Omega)}^2\\
&\quad-C (1+\|b^n(t)\|_{\mathcal{H}^{m-1}}^4 )\|{\bf v}^{n+1}(t)\|_{\mathcal{H}^m}^2.
\end{aligned}\end{align}
$J_2$  can be estimated as  $I_3$ in \eqref{est-tan2-0}. Thus,  for $|\alpha|\leq m-1$ with $m\geq4$,  we obtain that
\begin{align*}
	&\left\|\partial_\tau^\beta (b^n+B) \partial_{\tau}^{\alpha-\beta}\partial_y\left[(b^n+B)\partial_y v^{n+1}+\phi_y U b^{n+1}\right]\right\|_{L^2(\Omega)}\lesssim \left(1+\|b^n(t)\|_{\mathcal{H}^{m-1}}^2\right)\|{\bf v}^{n+1} (t)\|_{\mathcal{H}^{m}}.
\end{align*}
Hence
\begin{align}\label{est-J_2}
	\begin{aligned}
		|J_2|\lesssim &\left(1+\|b^n(t)\|_{\mathcal{H}^{m-1}}^2\right)\|{\bf v}^{n+1} (t)\|_{\mathcal{H}^{m}}\cdot\|\partial_{\tau}^{\alpha}\partial_t v^{n+1}\|_{L^2(\Omega)}\\
		\leq &\frac{1}{6} \|\partial_\tau^\alpha\partial_t v^{n+1}(t)\|_{L^2(\Omega)}^2 +C \left(1+\|b^n(t)\|_{\mathcal{H}^{m-1}}^4\right)\|{\bf v}^{n+1} (t)\|_{\mathcal{H}^{m}}^2.
			\end{aligned}
\end{align}
Substituting \eqref{est-J_1} and \eqref{est-J_2} into \eqref{est-v_t2-0} leads to 
\begin{align}\label{est-v_t2}\begin{aligned}
&-\int_{\Omega}\partial_{\tau}^{\alpha}\left\{(b^n+B)\partial_y\left[(b^n+B)\partial_y v^{n+1}+\phi_y U b^{n+1}\right]\right\}\cdot\partial_{\tau}^{\alpha}\partial_t v^{n+1}dxdy\\
&\geq \frac12\frac{d}{dt}\int_{\Omega}\left[(b^n+B)\partial_{\tau}^{\alpha}\partial_y v^{n+1}\right]^2dxdy-\frac{1}{8}\left\|\partial_y\partial_{\tau}^{\alpha}\partial_t v^{n+1}\right\|_{L^2(\Omega)}^2-\frac{1}{3} \|\partial_\tau^\alpha\partial_t v^{n+1}(t)\|_{L^2(\Omega)}^2\\
&\quad -C (1+\|b^n(t)\|_{\mathcal{H}^{m-1}}^4 )\|{\bf v}^{n+1}(t)\|_{\mathcal{H}^m}^2.
\end{aligned}\end{align}
And
\begin{align}\label{est-v_t3}
	\left|\int_{\Omega}\partial_\tau^\alpha  r_2\cdot\partial_\tau^\alpha\partial_t v^{n+1}dxdy \right|\leq\frac{1}{6} \|\partial_\tau^\alpha\partial_t v^{n+1}(t)\|_{L^2(\Omega)}^2+\frac{3}{2} \|\partial_\tau^\alpha r_2(t)\|_{L^2(\Omega)}^2.
\end{align}
Combining \eqref{est-v_t0}, \eqref{est-v_t1}, \eqref{est-v_t2} and  \eqref{est-v_t3} together, we obtain
\begin{align}\label{est-v_t4}
\begin{aligned}
	&\frac{d}{dt}\int_{\Omega}\left[(b^n+B)\partial_{\tau}^{\alpha}\partial_y v^{n+1}\right]^2dxdy+\|\partial_\tau^\alpha\partial_t v^{n+1}(t)\|_{L^2(\Omega)}^2\\
	&\leq \frac{1}{4}\left\|\partial_y\partial_{\tau}^{\alpha}\partial_t v^{n+1}(t)\right\|_{L^2(\Omega)}^2+C\|\partial_\tau^\alpha r_2(t)\|_{L^2(\Omega)}^2+C(1+\|{\bf v}^n(t)\|_{\mathcal{H}^{m-1}}^4 )\|{\bf v}^{n+1}(t)\|_{\mathcal{H}^m}^2.
\end{aligned}\end{align}
Summing it over $\alpha$ for $|\alpha|\leq m-1$ yields
\begin{align}\label{est-v_y}
	\begin{aligned}
		&\frac{d}{dt}\int_{\Omega}\sum_{|\alpha|\leq m-1}\left[(b^n+B)\partial_{\tau}^{\alpha}\partial_y v^{n+1}\right]^2dxdy+\|\partial_t v^{n+1}(t)\|_{\mathcal{B}^{m-1,0}}^2\\
	&\leq \frac{1}{4}\left\|\partial_y v^{n+1}(t)\right\|_{\mathcal{B}^{m,0}}^2+C\| r_2(t)\|_{\mathcal{B}^{m-1,0}}^2+C(1+\|{\bf v}^n(t)\|_{\mathcal{H}^{m-1}}^4 )\|{\bf v}^{n+1}(t)\|_{\mathcal{H}^m}^2.
	\end{aligned}
\end{align}

{\bf Step II.} In this step, we will show the following estimates for any $t\in[0,T_{n+1})$ and $m\geq4$:
\begin{align}\label{est-v_yy1}
	\|\partial_y^2 v^{n+1}(t)\|_{\mathcal{H}^{m-1}}\lesssim1+
	\left(1+\|{\bf v}^n(t)\|_{\mathcal{H}^{m}}^2\right)\| {\bf v}^{n+1}(t)\|_{\mathcal{H}^{m}},
\end{align}
and
\begin{align} \label{est-v_yy2}
\|\partial_y^2 v^{n+1}(t)\|_{\mathcal{H}^{m-2}}\lesssim1+
	\int_0^t\left(1+\|{\bf v}^n(s)\|_{\mathcal{H}^{m}}^2\right)\| {\bf v}^{n+1}(s)\|_{\mathcal{H}^{m}}ds.
\end{align}	
In fact, we only need to prove  \eqref{est-v_yy1}. Because for any $\alpha\in\mathbb{N}^3$ with $|\alpha|\leq m-2,$
\begin{align*}
	\|\partial^\alpha\partial_y^2 v^{n+1}(t)\|_{L^2(\Omega)}\leq &\|\partial^\alpha\partial_y^2 v^{n+1}(0)\|_{L^2(\Omega)}+\int_0^t \|\partial_t\partial^\alpha\partial_y^2 v^{n+1}(s)\|_{L^2(\Omega)}ds\\
	\leq &\|\partial^\alpha\partial_y^2 v^{n+1}(0)\|_{L^2(\Omega)}+\int_0^t \|\partial_y^2 v^{n+1}(s)\|_{\mathcal{H}^{m-1}}ds,
\end{align*}
then \eqref{est-v_yy2} follows from \eqref{regular-t} and \eqref{est-v_yy1} immediately.

By the second equation in $(\ref{iteration-n+1})$, we write
\begin{align}\label{N2}
\begin{aligned}
(b^n+B)^2\partial_y^2v^{n+1}=&\partial_t v^{n+1}-(b^n+B)\partial_x b^{n+1}+(v^n+U\phi)\partial_x v^{n+1}-(b^n+B)\partial_y b^n\partial_y v^{n+1}\\
&-(b^n+B)\partial_y\big(\phi_y Ub^{n+1}\big)-(\phi_{yy} UB +B_x) b^{n+1}+\phi U_x v^{n+1}-r_2.
\end{aligned}\end{align}
To show \eqref{est-v_yy1}, we apply the operator $\partial^{\alpha} (|\alpha|\leq m-1)$ on $(\ref{N2})$ to obtain
\begin{align*}
&(b^n+B)^2\partial^\alpha\partial_y^2v^{n+1}+\left[\partial^\alpha,  (b^n+B)^2\right]\partial_y^2v^{n+1}\\
=&\partial^\alpha\partial_t v^{n+1}+\partial^\alpha\left\{-(b^n+B)\partial_x b^{n+1}+(v^n+U\phi)\partial_x v^{n+1}
-(b^n+B)\partial_y b^n\partial_y v^{n+1}\right.\\
&\qquad\qquad\qquad\quad\left.-(b^n+B)\partial_y\big(\phi_y Ub^{n+1}\big)-(\phi_{yy}UB+B_x) b^{n+1}+\phi U_x v^{n+1}\right\}-\partial^\alpha r_2.
\end{align*}
Then,
\begin{align*}
	\left\|(b^n+B)^2\partial^\alpha\partial_y^2 v^{n+1}\right\|_{L^2(\Omega)}
	\leq &\left\|\left[\partial^\alpha,  (b^n+B)^2\right]\partial_y^2v^{n+1}\right\|_{L^2(\Omega)}+\left\|\partial^\alpha \partial_t v^{n+1}\right\|_{L^2(\Omega)}+\left\|\partial^\alpha r_2\right\|_{L^2(\Omega)}\\
	&+\left\|\partial^\alpha\left\{-(b^n+B)\partial_x b^{n+1}+(v^n+U\phi)\partial_x v^{n+1}-(b^n+B)\partial_y b^n\partial_y v^{n+1}\right.\right.\\
&\qquad\quad\left.\left.-(b^n+B)\partial_y\big(\phi_y Ub^{n+1}\big)-(\phi_{yy}UB+B_x) b^{n+1}+\phi U_x v^{n+1}\right\}\right\|_{L^2(\Omega)}.
\end{align*}
As $|\alpha|\leq m-1$ and $m\geq4$, similar to \eqref{est-commutator1} and \eqref{est-commutator2}, one has
\begin{align*}
\begin{aligned}
	\left\|\left[\partial^\alpha,  (b^n+B)^2\right]\partial_y^2v^{n+1}\right\|_{L^2(\Omega)}\lesssim &\left(1+\sum_{\beta\leq\alpha,|\beta|=1}\|\partial^\beta b^n(t)\|_{\mathcal{H}^{m-2}}^2\right)\|\partial_y^2 v^{n+1}(t)\|_{\mathcal{H}^{m-2}}\\
	\lesssim &\left(1+\| b^n(t)\|_{\mathcal{H}^{m-1}}^2\right)\| v^{n+1}(t)\|_{\mathcal{H}^{m}},
\end{aligned}\end{align*}
and
\begin{align*}
\begin{aligned}
	&\left\|\partial^\alpha\left\{-(b^n+B)\partial_x b^{n+1}+(v^n+U\phi)\partial_x v^{n+1}-(b^n+B)\partial_y b^n\partial_y v^{n+1}\right.\right.\\
&\qquad\left.\left.-(b^n+B)\partial_y\big(\phi_y Ub^{n+1}\big)-(\phi_{yy}UB+B_x) b^{n+1}+\phi U_x v^{n+1}\right\}\right\|_{L^2(\Omega)}\\
&\lesssim\Big[1+\|{\bf v}^n(t)\|_{\mathcal{H}^{m-1}}+(1+\|b^n(t)\|_{\mathcal{H}^{m-1}})\|\partial_y b^n(t)\|_{\mathcal{H}^{m-1}} \Big]\\
&\quad\cdot\Big(\|\partial_x{\bf v}^{n+1}(t)\|_{\mathcal{H}^{m-1}}+\|\partial_y{\bf v}^{n+1}(t)\|_{\mathcal{H}^{m-1}}+\|{\bf v}^{n+1}(t)\|_{\mathcal{H}^{m-1}} \Big)\\
&\lesssim\left(1+\| {\bf v}^n(t)\|_{\mathcal{H}^{m}}^2\right)\| {\bf v}^{n+1}(t)\|_{\mathcal{H}^{m}}.
\end{aligned}\end{align*}
Thus combining the above three estimates and using \eqref{iteration-n}, we have
\begin{align*}
	\left\|\partial^\alpha\partial_y^2 v^{n+1}(t)\right\|_{L^2(\Omega)}
\lesssim &\left\|\partial^\alpha r_2(t)\right\|_{L^2(\Omega)}+\left(1+\|{\bf v}^n(t)\|_{\mathcal{H}^{m}}^2\right)\|{\bf v}^{n+1}(t)\|_{\mathcal{H}^{m}}.
\end{align*}
It follows that
\begin{align}\label{est-v_yy3}
	\left\|\partial_y^2 v^{n+1}(t)\right\|_{\mathcal{H}^{m-1}}
\lesssim &\left\| r_2(t)\right\|_{\mathcal{H}^{m-1}}+\left(1+\|{\bf v}^n(t)\|_{\mathcal{H}^{m}}^2\right)\|{\bf v}^{n+1}(t)\|_{\mathcal{H}^{m}}.
\end{align}
This implies \eqref{est-v_yy1} by using \eqref{est-r}.

Moreover, combining \eqref{est-tan} with \eqref{est-v_yy3} gives
\begin{align}\label{est-tan-y}
\begin{aligned}
&\frac{d}{dt}\left\|{\bf v}^{n+1}(t)\right\|_{\mathcal{B} ^{m,0}}^2+\left\|\partial_y v^{n+1}(t)\right\|_{\mathcal{H} ^{m}}^2\lesssim \big(1+\|{\bf v}^n(t)\|_{\mathcal{H}^m}^4\big)\|{\bf v}^{n+1}(t)\|^2_{\mathcal{H}^m}+\|r_1(t)\|^2_{\mathcal{B} ^{m,0}}+\| r_2(t)\|^2_{\mathcal{H} ^{m}}.
\end{aligned}\end{align}

{\bf Step III.}  Applying the operator $\partial^{\alpha}=\partial_t^{\alpha_0}\partial_x^{\alpha_1} \partial_y^{\alpha_2}$ to the first equation in $(\ref{iteration-n+1})$ with $ |\alpha|\leq m$ and $\alpha_2\geq1$,
then multiplying the resulting equation by $\partial^{\alpha} b^{n+1}$ and integrating it over $\Omega$
yield
\begin{align}\label{est-b_y0}
\begin{aligned}
&\frac12\frac{d}{dt}\int_{\Omega}(\partial^{\alpha} b^{n+1})^2dxdy+\int_{\Omega}\partial^{\alpha}\left((v^n+U\phi)\partial_xb^{n+1}\right)\cdot\partial^\alpha b^{n+1} dxdy\\
&-\int_{\Omega}\partial^\alpha\left( (b^n+B)\partial_x v^{n+1}\right)\cdot\partial^\alpha b^{n+1} dxdy +\int_{\Omega}\partial^{\alpha}\left(B_x v^{n+1}-\phi U_x b^{n+1}\right)\cdot\partial^\alpha b^{n+1} dxdy\\
&=\int_{\Omega}\partial^\alpha r_1\cdot\partial^\alpha b^{n+1}dxdy .
\end{aligned}\end{align}
For the second term on the left-hand side of the above equality, we have
\begin{align*}
\begin{aligned}
	&\int_{\Omega}\partial^{\alpha}\left((v^n+U\phi)\partial_x b^{n+1}\right)\cdot\partial^\alpha b^{n+1} dxdy\\
	&= \int_{\Omega}\left((v^n+U\phi)\partial_x \partial^{\alpha} b^{n+1}+\left[\partial^\alpha, (v^n+U\phi)\right]\partial_x b^{n+1}\right)\cdot\partial^\alpha b^{n+1} dxdy\\
	&=-\frac{1}{2}\int_{\Omega}\partial_x(v^n+U\phi)\cdot\left(\partial^{\alpha} b^{n+1}\right)^2dxdy+\int_{\Omega}\left[\partial^\alpha, (v^n+U\phi)\right]\partial_x b^{n+1}\cdot\partial^\alpha b^{n+1}dxdy.
\end{aligned}\end{align*}
And for $|\alpha|\leq m$ and $m\geq4$, similar to \eqref{est-commutator1}, it holds that
\begin{align}\label{app-Moser}
\begin{aligned}
	\left\|\left[\partial^\alpha, (v^n+U\phi)\right]\partial_x b^{n+1}\right\|_{L^2(\Omega)}\lesssim &\sum_{\beta\leq\alpha, |\beta|=1}\left\|\partial^\beta(v^n+U\phi)(t)\right\|_{\mathcal{H}^{m-1}}\left\|\partial_xb^{n+1}(t)\right\|_{\mathcal{H}^{m-1}}\\
	\lesssim &(1+\|v^n(t)\|_{\mathcal{H}^m} )\|b^{n+1}(t)\|_{\mathcal{H}^m}
\end{aligned}\end{align}
due to \eqref{Moser}. Then
\begin{align}\label{est-b_y1}
\begin{aligned}
	&\left|\int_{\Omega}\partial^{\alpha}\left((v^n+U\phi)\partial_x b^{n+1}\right)\cdot\partial^\alpha b^{n+1} dxdy\right|
	\lesssim (1+\|v^n(t)\|_{\mathcal{H}^m} )\|b^{n+1}(t)\|_{\mathcal{H}^m}^2.
\end{aligned}\end{align}

For the third term on the left-hand side of \eqref{est-b_y0}, we have
\begin{align*}
	\begin{aligned}
		&\left|\int_{\Omega}\partial^\alpha\left( (b^n+B)\partial_x v^{n+1}\right)\cdot\partial^\alpha b^{n+1} dxdy\right|\\
		&= \left|\int_{\Omega}\left\{(b^n+B)\partial_x \partial^{\alpha} v^{n+1}+\left[\partial^\alpha, (b^n+B)\right]\partial_x v^{n+1}\right\}\cdot\partial^\alpha b^{n+1} dxdy\right|\\
	&\leq \epsilon\|\partial_x \partial^{\alpha} v^{n+1}(t)\|_{L^2(\Omega)}^2+C_\epsilon\|(b^n+B)(t)\|_{L^\infty(\Omega)}^2\|\partial^{\alpha} b^{n+1}(t)\|_{L^2(\Omega)}^2\\
	&\quad+\left\|\left[\partial^\alpha, (b^n+B)\right]\partial_x v^{n+1}(t)\right\|_{L^2(\Omega)}\left\|\partial^\alpha b^{n+1}(t)\right\|_{L^2(\Omega)},
	\end{aligned}
\end{align*}
for some sufficiently small $\epsilon>0$.
Similarly to   \eqref{est-commutator2}, one has
\begin{align*}
\displaystyle	\left\|\left[\partial^\alpha, (b^n+B)\right]\partial_x v^{n+1}(t)\right\|_{L^2(\Omega)}\lesssim &(1+\|b^n(t)\|_{\mathcal{H}^m} )\|v^{n+1}(t)\|_{\mathcal{H}^m}.
\end{align*}
Consequently, for $\alpha_2\geq1$ in $\alpha$, it follows, from the above two estimates, that
\begin{align}\label{est-b_y2}
	\begin{aligned}
		&\left|\int_{\Omega}\partial^\alpha\left( (b^n+B)\partial_x v^{n+1}\right)\cdot\partial^\alpha b^{n+1} dxdy\right|\\
	&\leq \epsilon\|\partial_y v^{n+1}(t)\|_{\mathcal{H}^m}^2+C_\epsilon\left(1+\|b^n(t)\|_{\mathcal{H}^m}^2 \right)\|{\bf v}^{n+1}(t)\|_{\mathcal{H}^m}^2.
		\end{aligned}
\end{align}

Moreover, by the similar arguments in \eqref{est-commutator4}, we have
\begin{align*}
	\left\|\partial^{\alpha}\left(B_x v^{n+1}-\phi U_x b^{n+1}\right)\right\|_{L^2(\Omega)}\lesssim \left\|{\bf v}^{n+1}(t)\right\|_{\mathcal{H}^m(\Omega)}.
\end{align*}
It implies
\begin{align}\label{est-b_y3}
\begin{aligned}
&\left|\int_{\Omega}\partial^{\alpha}\left(B_x v^{n+1}-\phi U_x b^{n+1}\right)\cdot\partial^\alpha b^{n+1} dxdy\right|\\
&	\leq \left\|\partial^{\alpha}\left(B_x v^{n+1}-\phi U_x b^{n+1}\right)(t)\right\|_{L^2(\Omega)}\left\|\partial^{\alpha} b^{n+1}(t)\right\|_{L^2(\Omega)}
\lesssim \left\|{\bf v}^{n+1}(t)\right\|_{\mathcal{H}^m(\Omega)}^2.
\end{aligned}\end{align}
Substituting  \eqref{est-b_y1}-\eqref{est-b_y3} into \eqref{est-b_y0}, it follows that
\begin{align*}
	\begin{aligned}
		\frac{d}{dt}\left\|\partial^{\alpha} b^{n+1}(t)\right\|_{L^2(\Omega)}^2\leq &\epsilon\|\partial_y v^{n+1}(t)\|_{\mathcal{H}^m}^2+\|\partial^{\alpha} r_1(t)\|_{L^2(\Omega)}^2+C_\epsilon\left(1+\|{\bf v}^n(t)\|_{\mathcal{H}^m}^2 \right)\|{\bf v}^{n+1}(t)\|_{\mathcal{H}^m}^2.
	\end{aligned}
\end{align*}
Summing the above estimates over $\alpha$ for $|\alpha|\leq m$ and $\alpha_2\geq1$ yields
\begin{align}\label{est-b_y}
	\begin{aligned}
		\frac{d}{dt}\left\|\partial_y b^{n+1}(t)\right\|_{\mathcal{H}^{m-1}}^2\leq &C\epsilon\|\partial_y v^{n+1}(t)\|_{\mathcal{H}^{m}}^2+\| r_1(t)\|_{\mathcal{H}^{m}}^2+C_\epsilon\left(1+\|{\bf v}^n(t)\|_{\mathcal{H}^m}^2 \right)\|{\bf v}^{n+1}(t)\|_{\mathcal{H}^m}^2.
	\end{aligned}
\end{align}

Combining \eqref{est-v_y}, \eqref{est-tan-y} and \eqref{est-b_y} together, and choosing $\epsilon$ suitably small, it holds that
\begin{align}\label{est_y}
\begin{aligned}
&\frac{d}{dt}\left(\left\|{\bf v}^{n+1}(t)\right\|_{\mathcal{B}^{m,0}}^2+ \int_{\Omega}\sum_{|\alpha|\leq m-1}\left[(b^n+B)\partial_{\tau}^{\alpha}\partial_y v^{n+1}\right]^2dxdy+\left\|\partial_y b^{n+1}(t)\right\|_{\mathcal{H}^{m-1}}^2\right)+\|\partial_y v^{n+1}(t)\|_{\mathcal{H}^{m}}^2\\
&\lesssim \| r_1(t)\|_{\mathcal{H}^{m}}^2+\| r_2(t)\|_{\mathcal{H}^{m}}^2+\left(1+\|{\bf v}^n(t)\|_{\mathcal{H}^m}^4 \right)\|{\bf v}^{n+1}(t)\|_{\mathcal{H}^m}^2
\end{aligned}
\end{align}
for $m\geq4$.

Integrating (\ref{est_y}) with respect to $t\in[0,T_{n+1})$  yields that
\begin{align}\label{E9}
\begin{aligned}
	&\left\|{\bf v}^{n+1}(t)\right\|_{\mathcal{B}^{m,0}}^2+ \int_{\Omega}\sum_{|\alpha|\leq m-1}\left[(b^n+B)\partial_{\tau}^{\alpha}\partial_y v^{n+1}\right]^2(t)dxdy+\left\|\partial_y b^{n+1}(t)\right\|_{\mathcal{H}^{m-1}}^2+\int_0^t\|\partial_y v^{n+1}(s)\|_{\mathcal{H}^{m}}^2ds\\
		&\leq \left\|{\bf v}^{n+1}(0)\right\|_{\mathcal{B}^{m,0}}^2+ \int_{\Omega}\sum_{|\alpha|\leq m-1}\left[(b^n+B)\partial_{\tau}^{\alpha}\partial_y v^{n+1}\right]^2(0)dxdy+\left\|\partial_y b^{n+1}(0)\right\|_{\mathcal{H}^{m-1}}^2\\
&\quad+C  \int_0^t\| r_1(s)\|_{\mathcal{H}^{m}}^2+\| r_2(s)\|_{\mathcal{H}^{m}}^2+\left(1+\|{\bf v}^n(s)\|_{\mathcal{H}^m}^4 \right)\|{\bf v}^{n+1}(s)\|_{\mathcal{H}^m}^2ds.
\end{aligned}	\end{align}
By  \eqref{regular-t} and \eqref{initial_n+1}, we have
\begin{align*}
	\left\|{\bf v}^{n+1}(0)\right\|_{\mathcal{B}^{m,0}}^2+ \int_{\Omega}\sum_{|\alpha|\leq m-1}\left[(b^n+B)\partial_{\tau}^{\alpha}\partial_y v^{n+1}\right]^2(0)dxdy+\left\|\partial_y b^{n+1}(0)\right\|_{\mathcal{H}^{m-1}}^2\lesssim M_0.
\end{align*}
Then, by applying the above estimate, the lower boundedness of $b^n+B$ in \eqref{iteration-n} and \eqref{est-r} into \eqref{E9},  there exists a constant $C=C(T, M,M_0, \delta_0)$ such that
\begin{align*}
\begin{aligned}
	&\left\|{\bf v}^{n+1}(t)\right\|_{\mathcal{B}^{m,0}}^2+\left\|\partial_y v^{n+1}(t)\right\|_{\mathcal{B}^{m-1,0}}^2+\left\|\partial_y b^{n+1}(t)\right\|_{\mathcal{H}^{m-1}}^2+\int_0^t\|\partial_y v^{n+1}(s)\|_{\mathcal{H}^{m}}^2ds\\
		&\leq C+ C \int_0^t\left(1+\|{\bf v}^n(s)\|_{\mathcal{H}^m}^4 \right)\|{\bf v}^{n+1}(s)\|_{\mathcal{H}^m}^2ds,
\end{aligned}	\end{align*}
or
\begin{align}\label{Est_y}
\begin{aligned}
	&\left\| v^{n+1}(t)\right\|_{\mathcal{B}^{m,0}}^2+\left\|\partial_y v^{n+1}(t)\right\|_{\mathcal{B}^{m-1,0}}^2+\left\| b^{n+1}(t)\right\|_{\mathcal{H}^{m}}^2+\int_0^t\|\partial_y v^{n+1}(s)\|_{\mathcal{H}^{m}}^2ds\\
		&\leq C+ C \int_0^t\left(1+\|{\bf v}^n(s)\|_{\mathcal{H}^m}^4 \right)\|{\bf v}^{n+1}(s)\|_{\mathcal{H}^m}^2ds.
\end{aligned}	\end{align}
Furthermore, from \eqref{est-v_yy2} and \eqref{Est_y},  we obtain
\begin{align}\label{Est_final}
\begin{aligned}
	&\left\|{\bf v}^{n+1}(t)\right\|_{\mathcal{H}^{m}}^2+\int_0^t\|\partial_y v^{n+1}(s)\|_{\mathcal{H}^{m}}^2ds\leq C+ C \int_0^t\left(1+\|{\bf v}^n(s)\|_{\mathcal{H}^m}^4 \right)\|{\bf v}^{n+1}(s)\|_{\mathcal{H}^m}^2ds.
\end{aligned}	\end{align}

Now, 
we are ready to establish the uniform in $n$ boundedness of $\|{\bf v}^{n+1}(t)\|_{\mathcal{H}^m}$ in a small time interval. Precisely,
by applying the Gronwall inequality to \eqref{Est_final}, it shows that  there is a constant $C>0$ depending only on $T, \delta_0, M$ and $M_0$  such that
\begin{align}\label{est-final}
	\|{\bf v}^{n+1}(t)\|^2_{\mathcal{H}^{m}(\Omega) }+\int_0^t\|\partial_y v^{n+1}(s)\|_{\mathcal{H}^{m}}^2ds\leq C\exp\left\{C\int_0^t\|{\bf v}^n(s)\|_{\mathcal{H}^m}^4ds\right\}.
\end{align}
Thus, choosing suitably large $M_\ast$, there exists a sufficient small $\tilde T>0$ which depends only on $C$ and $M_\ast$  such that
\begin{align}\label{est-fin}
{\bf v}^n\in E_{\tilde T}(M_\ast) ~\Longrightarrow~  \|{\bf v}^{n+1}\|_{\mathcal{H}^m(\Omega_{\tilde T})}\leq M_\ast.
\end{align}
More precisely, denote by
\begin{align}\label{def-C0}
	C_0~=~\max\left\{C,~\sup_{t\in[0,T_0]}\|{\bf v}^0(t)\|_{\mathcal{H}^m } \right\}
\end{align}
with $C$ given in \eqref{est-final}, we have the following proposition.
\begin{prop}
	Let $\displaystyle T^\ast=\min\left\{T_0, \hat T\right\}$ with $\hat T$ given by
	\begin{align}\label{def-hatT}
		\hat T\left(M+\frac{\sqrt{C_0}}{\sqrt[4]{1-2C_0^3\hat T}}\right)=\frac{\delta_0}{2},
	\end{align}
	where $M$ and $C_0$ are given in \eqref{OUTE} and \eqref{def-C0} respectively. The approximate solution sequence $\{{\bf v}^n\}_{n=0}^\infty$ constructed in the subsection \ref{SUB} satisfies that
	\begin{align}\label{est-prop1}
		\left\|{\bf v}^n(t) \right\|_{\mathcal{H}^m}^2~\leq~\frac{C_0}{\sqrt{1-2C_0^3t}},\qquad\forall~t\in[0,T^\ast],
	\end{align}
	and
	\begin{align}\label{est-prop2}
		b^n(t,x,y)+B(t,x)\geq\frac{\delta_0}{2}, \qquad \forall~(t,x,y)\in\Omega_{T^\ast}
	\end{align}
for any $n\geq0$.
\end{prop}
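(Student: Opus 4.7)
My plan is induction on $n$, establishing \eqref{est-prop1} and \eqref{est-prop2} simultaneously on $[0,T^\ast]$. The base case $n=0$ is immediate: $\|{\bf v}^0(t)\|_{\mathcal{H}^m}^2 \leq C_0 \leq C_0/\sqrt{1-2C_0^3 t}$ by the definition \eqref{def-C0}, and the positivity $b^0+B\geq \delta_0/2$ on $[0,T_0]$ was noted already when ${\bf v}^0$ was constructed.

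For the inductive step, I would first feed the hypothesis into the a priori estimate \eqref{est-final}. Using $\|{\bf v}^n(s)\|_{\mathcal{H}^m}^4 \leq C_0^2/(1-2C_0^3 s)$ under the integral and evaluating
\[
C\int_0^t \frac{C_0^2}{1-2C_0^3 s}\,ds = -\frac{C}{2C_0}\log(1-2C_0^3 t),
\]
I obtain
\[
\|{\bf v}^{n+1}(t)\|_{\mathcal{H}^m}^2 \leq C\,(1-2C_0^3 t)^{-C/(2C_0)}.
\]
The choice $C_0\geq C$ built into \eqref{def-C0} forces $C/(2C_0)\leq 1/2$, so the right-hand side is dominated by $C_0/(1-2C_0^3 t)^{1/2}$, closing the Sobolev bound. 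For the positivity \eqref{est-prop2}, I would write
\[
b^{n+1}(t,x,y)+B(t,x) = b_0(x,y)+\int_0^t \partial_s(b^{n+1}+B)(s,x,y)\,ds,
\]
use the Sobolev embedding $\mathcal{H}^{m-1}\hookrightarrow L^\infty$ (valid since $m\geq 4$) together with the norm bound just derived to get $\|\partial_s b^{n+1}(s)\|_{L^\infty}\lesssim \|{\bf v}^{n+1}(s)\|_{\mathcal{H}^m} \leq \sqrt{C_0}\,(1-2C_0^3 s)^{-1/4}$, and combine with $\|\partial_s B\|_{L^\infty}\lesssim M$ from \eqref{OUTE} and \eqref{CC} to deduce
\[
b^{n+1}(t,x,y)+B(t,x)\geq \delta_0 - t\bigl(M+\sqrt{C_0}\,(1-2C_0^3 t)^{-1/4}\bigr),
\]
which is $\geq\delta_0/2$ for $t\leq\hat T$ by the defining equation \eqref{def-hatT}. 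Existence of $\hat T\in(0,(2C_0^3)^{-1})$ is a one-line application of the intermediate value theorem to the continuous strictly increasing function on the left of \eqref{def-hatT}, which vanishes at $\hat T=0$ and diverges as $\hat T\to (2C_0^3)^{-1}$.

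\textbf{Main obstacle.} The delicate point is matching the Gronwall exponent to the supersolution profile $C_0/\sqrt{1-2C_0^3 t}$: the bound \eqref{est-final} naturally produces $(1-2C_0^3 t)^{-C/(2C_0)}$, and to lock in the $1/2$ exponent one must arrange $C_0\geq C$ at the outset, which is precisely why \eqref{def-C0} takes the maximum of $C$ and the initial profile norm. A secondary concern is that the constant $C$ in \eqref{est-final} depends on $\delta_0$ through the pointwise lower bound $b^n+B\geq\delta_0/2$ used in its derivation (see e.g.\ \eqref{est-L2-2} and \eqref{est-tan2-1}); this means \eqref{est-prop2} must be propagated in parallel with \eqref{est-prop1} within the same induction rather than sequentially, since losing the positivity at step $n$ would destroy the uniform-in-$n$ constant needed at step $n+1$.
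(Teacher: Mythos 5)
Your proposal is correct and follows essentially the same route as the paper: induction on $n$ with both claims propagated together, the Gronwall bound \eqref{est-final} combined with the inductive hypothesis to produce the profile $C_0(1-2C_0^3t)^{-1/2}$ (the paper simply replaces $C$ by $C_0\geq C$ and gets equality, while you keep $C$ and dominate afterwards — same content), and the fundamental theorem of calculus plus Sobolev embedding together with the defining equation \eqref{def-hatT} for the lower bound on $b^{n+1}+B$. Your closing remarks on why $C_0$ is taken as a maximum and why the two estimates cannot be decoupled in the induction match the logic implicit in the paper's argument.
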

\begin{proof}
	We will prove this proposition by induction of $n$. First of all, by using the definition of $C_0$ and $T^\ast$, it is easy to know that \eqref{est-prop1} and \eqref{est-prop2} hold for $n=0$.
	
Assume that the estimates \eqref{est-prop1} and \eqref{est-prop2} are valid for some $n\geq0$, then
as what are shown  in the above three subsections \ref{SUB1}-\ref{SUB3}, we  obtain \eqref{est-final} for ${\bf v}^{n+1}$, in which the constant $C$ is replaced by $C_0$. Therefore, by the induction hypothesis,  it follows that,
\begin{align}\label{est-uni-n+1}
	\|{\bf v}^{n+1}(t)\|^2_{\mathcal{H}^{m}(\Omega) }\leq C_0\exp\left\{C_0\int_0^t\frac{C_0^2}{1-2C_0^3s}ds\right\}=\frac{C_0}{\sqrt{1-2C_0^3t}}
\end{align}
for $t\in[0,T^\ast]$. 

In addition,  since
\begin{align*}
	b^{n+1}(0,x,y)+B(0,x)=b_0(x,y)\geq\delta_0
\end{align*}
due to \eqref{IID}, it follows that
\begin{align*}\begin{aligned}
	b^{n+1}(t,x,y)+B(t,x)=&b^{n+1}(0,x,y)+B(0,x)+\int_0^t\partial_t b^{n+1}(s,x,y)+\partial_t B(s,x)ds\\
	\geq &\delta_0-\int_0^t\|\partial_t b^{n+1}(s,\cdot)\|_{L^\infty(\Omega)}+\|\partial_t B(s,\cdot)\|_{L^\infty(\T_x)}ds\\
	\geq &\delta_0-\int_0^t\|b^{n+1}(s)\|_{\mathcal{H}^3}+\|\partial_t B(s,\cdot)\|_{H^1(\T_x)}ds
\end{aligned}\end{align*}
for any $t\in[0,T^\ast]$, where the Sobolev embedding inequalities are used. Then, together with \eqref{OUTE} and \eqref{est-uni-n+1}, this implies
\begin{align*}
	b^{n+1}(t,x,y)+B(t,x)\geq \delta_0-t\left(\frac{\sqrt{C_0}}{\sqrt[4]{1-2C_0^3t}}+M\right), \quad \forall (t,x,y)\in \Omega_{T^\ast}.
\end{align*}
By $T^\ast\leq\hat T$ and the definition of $\hat T$ in \eqref{def-hatT},   \eqref{est-prop2} holds for ${\bf v}^{n+1}$. Consequently, the proof of this proposition is completed.
\end{proof}
From the above proposition, we  obtain for   $T^\ast$ and
\begin{align}\label{def-M*}
	M_\ast~=~\frac{C_0}{\sqrt{1-2C_0^3T^\ast}},
\end{align}
 the mapping $\Pi$ defined by \eqref{def-map} and \eqref{iteration-n+1} is a mapping from $E_{T^\ast}(M_\ast)$ to $E_{T^\ast}(M_\ast)$ itself. In particular, it implies that the approximate solutions $\{{\bf v}^n\}_{n=0}^\infty$ constructed by \eqref{iteration-0} and \eqref{iteration-n+1} are uniform bounded in $\mathcal{H}^m(\Omega_{T^\ast})$ with $T^\ast>0$ independent of $n$,
\begin{align}\label{uni-bound}
	\|{\bf v}^n\|_{\mathcal{H}^m(\Omega_{T^\ast})}~\leq ~M_\ast,\qquad \forall\ n.
\end{align}

\subsection{Contraction in $L^2$ norm}
\label{UNIQ}
In this subsection, we will show the linear mapping $\Pi$ defined in (\ref{def-map}) is a contraction mapping in $L^2$ sense, at least in a  small time interval.

Set
\begin{align*}
\bar{\bf v}^{n+1}=( \bar{b}^{n+1}, \bar{v}^{n+1})=\left(b^{n+1}-b^n, v^{n+1}-v^n\right),\qquad n\geq1.
\end{align*}
Then by  \eqref{iteration-n+1}-\eqref{BBB},  $\bar{\bf v}^{n+1}(t,x,y)$ satisfies
\begin{align}\label{DE}
\left\{
\begin{array}{ll}
\partial_t\bar{b}^{n+1}+(v^n+U\phi)\partial_x \bar{b}^{n+1}-(b^n+B)\partial_x\bar{v}^{n+1}+B_x\bar{v}^{n+1}-\phi U_x\bar{b}^{n+1}=R_1^n,\\
\partial_t\bar{v}^{n+1}-(b^n+B)\partial_x\bar{b}^{n+1}+(v^n+U\phi)\partial_x \bar{v}^{n+1}-(b^n+B)\partial_y\left((b^n+B)\partial_y\bar{v}^{n+1}+\phi_y U\bar{b}^{n+1} \right)\\
\qquad-(\phi_{yy}UB+B_x)\bar{b}^{n+1}+\phi U_x\bar{v}^{n+1}=R_2^n,\\
\end{array}
\right.
\end{align}
with
\begin{align*}\begin{cases}
R_1^n=-\bar{v}^n\partial_x b^n+\bar{b}^n\partial_x v^n,\\
R_2^n=\bar{b}^n\partial_x b^n-\bar{v}^n\partial_x v^n+\bar{b}^n\left[\partial_y\left((b^n+B)\partial_y v^n+ \phi_y Ub^n \right)+\left(b^{n-1}+B\right)\partial_y^2 v^n \right]+(b^{n-1}+B)\partial_y v^n\partial_y\bar{b}^n.
\end{cases}\end{align*}
As in subsection \ref{SUB}, we can obtain the following estimate similar to \eqref{est-L2},
\begin{align}\label{DL}
\begin{aligned}
&\frac{d}{dt}\left\|\bar{\bf v}^{n+1}(t)\right\|_{L^2(\Omega)}^2+\left\|\partial_y \bar v^{n+1}(t)\right\|_{L^2(\Omega)}^2\\
&\leq C\big(1+\|{\bf v}^n(t)\|^2_{\mathcal{H}^3}\big)\|\bar{\bf v}^{n+1}(t)\|^2_{L^2(\Omega)}+C\int_{\Omega}(R_1^n\cdot\bar{b}^{n+1}+R_2^n\cdot\bar{v}^{n+1})dxdy, 
\qquad t\in[0,T^\ast].
\end{aligned} \end{align}
Denote
\begin{align*}
	R_2^n=(b^{n-1}+B)\partial_y v^n\partial_y\bar{b}^n+\tilde R_2^n.
\end{align*}
Then by  the boundary condition $\bar v^{n+1}|_{y=0}=0$, we get
\begin{align*}
	\int_{\Omega}R_2^n\cdot\bar v^{n+1}dxdy=&-\int_{\Omega}\bar b^n\left\{(b^{n-1}+B)\partial_y v^n\partial_y\bar v^{n+1}+\partial_y\left((b^{n-1}+B)\partial_y v^n\right\}\bar v^{n+1}  \right)dxdy\\
	&+\int_{\Omega}\tilde R_2^n\cdot \bar v^{n+1}dxdy\\
	\leq &\frac{1}{2C}\|\partial_y\bar v^{n+1}(t)\|_{L^2(\Omega)}^2+\frac{C}{2}\|(b^{n-1}+B)\partial_y v^n(t)\|_{L^\infty(\Omega )}^2\|\bar b^n(t)\|_{L^2(\Omega)}^2\\
	&+\int_{\Omega}\left(\tilde R_2^n-\bar b^n\partial_y\left((b^{n-1}+B)\partial_y v^n\right)\right)\cdot \bar v^{n+1}dxdy.
\end{align*}
Furthermore, based on the definition of $R_1^n$ and $R_2^n$ and the Sobolev embedding inequality, we have
\begin{align*}
	\|R_1^n(t)\|_{L^2(\Omega)}\leq \|{\bf v}^n(t)\|_{W^{1,\infty}(\Omega)}\|\bar{\bf v}^n(t)\|_{L^2(\Omega )}\leq \|{\bf v}^n(t)\|_{\mathcal{H}^{3}}\|\bar{\bf v}^n(t)\|_{L^2(\Omega )},
\end{align*}
and
\begin{align*}
	&\left\|\left(\tilde R_2^n-\bar b^n\partial_y\left((b^{n-1}+B)\partial_y v^n\right)\right)(t)\right\|_{L^2(\Omega)}\\
	&\lesssim \left(1+\|b^n(t)\|_{W^{1,\infty}(\Omega)}+\|b^{n-1}(t)\|_{W^{1,\infty}(\Omega)} \right)\|{\bf v}^n(t)\|_{W^{2,\infty}(\Omega)}\|\bar{\bf v}^n(t)\|_{L^2(\Omega )}\\
	&\lesssim \left(1+\|{\bf v}^{n-1}(t)\|_{\mathcal{H}^{3}}^2+\|{\bf v}^n(t)\|_{\mathcal{H}^{4}}^2\right)\|\bar{\bf v}^n(t)\|_{L^2(\Omega )}.
\end{align*}
Consequently,
\begin{align*}
	\begin{aligned}
		&C\int_{\Omega}R_1^n\cdot\bar{b}^{n+1}+R_2^n\cdot\bar{v}^{n+1}dxdy\\
		&\leq \frac12\|\partial_y\bar v^{n+1}(t)\|_{L^2(\Omega)}^2+C_1\|\bar{\bf v}^{n+1}(t)\|_{L^2(\Omega)}^2+C_1\left(1+\|{\bf v}^{n-1}(t)\|_{\mathcal{H}^{3}}^4+\|{\bf v}^n(t)\|_{\mathcal{H}^{4}}^4\right)\|\bar{\bf v}^n(t)\|_{L^2(\Omega )}^2.
	\end{aligned}
\end{align*}
Substituting the above estimate into \eqref{DL} and using the uniform estimate \eqref{uni-bound} for $\|{\bf v}^n(t)\|_{\mathcal{H}^m}$ with $m\geq4$, it is found that there exists a constant $C_2>0$, depending only on $T, \delta_0, M, M_0$ and $M_\ast$, such that
\begin{align*}
\begin{aligned}
&\frac{d}{dt}\left\|\bar{\bf v}^{n+1}(t)\right\|_{L^2(\Omega)}^2+\left\|\partial_y \bar v^{n+1}(t)\right\|_{L^2(\Omega)}^2\leq C_2\big(\|\bar{\bf v}^n(t)\|^2_{L^2(\Omega)}+\|\bar{\bf v}^{n+1}(t)\|^2_{L^2(\Omega)}\big),
\qquad t\in[0,T^\ast].
\end{aligned} \end{align*}
By applying the Gronwall inequality on the above estimate, it follows that
\begin{align*}
	\sup_{0\leq s\leq t}\left\|\bar{\bf v}^{n+1}(s)\right\|_{L^2(\Omega)}^2\leq C_2e^{C_2t}\cdot\int_0^t\|\bar{\bf v}^n(s)\|_{L^2(\Omega)}^2ds\leq C_2 te^{C_2t}\cdot\sup_{0\leq s\leq t}\left\|\bar{\bf v}^{n}(s)\right\|_{L^2(\Omega)}^2
\end{align*}
for any $t\in[0,T^\ast]$. Let
\begin{align}\label{def-T**}
T_{\ast}~=~\min\left\{T^\ast, ~\frac{1}{2C_2e^{C_2T^\ast}}\right\},	
\end{align}
we obtain
\begin{align}\label{contract}
	\sup_{0\leq s\leq t}\left\|\bar{\bf v}^{n+1}(s)\right\|_{L^2(\Omega)}^2\leq \frac{1}{2}\sup_{0\leq s\leq t}\left\|\bar{\bf v}^{n}(s)\right\|_{L^2(\Omega)}^2,\quad \forall~t\in[0,T_{\ast}],
\end{align}
which indeed shows that the mapping \eqref{def-map} is contractive in $L^2$.

\subsection{Proof of Theorem \ref{T3.1}}
Based on the uniform estimates in (\ref{uni-bound}) and contraction property of (\ref{contract}), it follows that the approximate solution sequence $\{{\bf v}^{n}\}_{n=0}^\infty$ is a Cauchy sequences in $\mathcal{H}^{k}(\Omega_{T_{\ast}})$ with $k<m$. Then  $\{{\bf v}^{n}\}_{n=0}^\infty$  converges in $\mathcal{H}^{k}(\Omega_{T_{\ast}})$ strongly. That is, there exists ${\bf v}=(b, v)$ such that 
\begin{align*}
\lim_{n\rightarrow+\infty}{\bf v}^n={\bf v},\quad \hbox{in}\quad \mathcal{H}^k(\Omega_{T_{\ast}}),
\end{align*}
with
\begin{align}\label{UEGU}
\|{\bf v}\|_{\mathcal{H}^k(\Omega_{T_{\ast}})}\leq M_\ast.
\end{align}
Let $n\rightarrow+\infty$ in (\ref{iteration-n+1}) and set
\begin{align*}
b_1=b+B(t,x),\qquad u=v+U(t,x)\phi(y).
\end{align*}
Then $(b_1, u)$ solves (\ref{VBE}) uniquely and this completes the proof of the Theorem \ref{T3.1}.

\subsection{Proof of Theorem \ref{MTH}}
With the solution $(b_1, u)$ to (\ref{VBE}) in hand,  we define a function $\psi=\psi(t,x,y)$ in the following form.
\begin{equation}\label{invers}
y=\int^{\psi(t,x,y)}_0\frac{d\eta}{b_1(t,x,\eta)}.
\end{equation}
It is straightforward  to know that $\psi(t,x,y)$ is well-defined in $\Omega_{T_\ast}$, because  $b_1\geq\frac{\delta_0}{2}$ given in \eqref{positi}.

Set
\begin{align}
(\hat{u}, \hat{b}_1)(t,x,y)=(u, b_1)(t,x,\psi(t,x,y)),
\end{align}
and
\begin{equation}\label{(u2,h2)-trans}
\hat{v}(t,x,y)=-\frac{(\partial_t\psi+\hat{u}\partial_x\psi)}{\hat{b}_1}(t,x,y),\quad \hat{b}_2(t,x,y)=-\partial_x\psi(t,x,y).
\end{equation}
Then, $(\hat{u}, \hat{v}, \hat{b}_1, \hat{b}_2)(t,x,y)$  is the desired solution of Theorem \ref{MTH} and
the  proof of Theorem \ref{MTH} is thus completed.

\bigskip

\section{Linear ill-posedness}
In this section, we will prove Theorem \ref{thm_lin} to show the linear instability of MHD boundary layer when the tangential magnetic field is degenerate at one point.
Let us recall  the important work \cite{G-D} about the linear ill-posedness of Prandtl equation without the monotonicity condition, where the key ingredient is to construct a strong unstable approximate solution to the linearized Prandtl equation. For later use, we present first the construction of approximate solution to the velocity field given in \cite{G-D}.  Without loss of generality, we assume that $U_s''(a)<0$, so that
the differential equation
\begin{equation}\label{def_alpha}\begin{cases}
\partial_t\partial_y u_s\big(t,a(t)\big)+\partial_y^2u_s\big(t,a(t)\big)a'(t)=0,\\
a(0)~=~a,
\end{cases}\end{equation}
defines a non-degenerate critical point curve $y=a(t)$ of $u_s(t,\cdot)$ satisfying $\partial_y u_s(t, a(t))=0$ and $\partial_y^2u_s\big(t,a(t)\big)<0$ for all $t\in[0,t_0)$ with $t_0$ being small enough.
As proved in \cite{G-D}, there exists a pair $\big(\tau, W(z)\big)$ such that the complex number $\tau$ satisfies $\Im \tau<0$ and the smooth function $W(z)$ satisfies
\begin{equation}\label{SC}\begin{cases}
\big(\tau-z^2\big)^2\frac{d}{dz}W+i\frac{d^3}{dz^3}\big((\tau-z^2)W\big)=0,\\
\lim\limits_{z\rightarrow-\infty}W(z)~=~0,\quad\lim\limits_{z\rightarrow+\infty}W(z)~=~1.
\end{cases}\end{equation}
Set
\begin{equation}\begin{split}\label{def_phi}
V(z)~&:=~\big(\tau-z^2\big) W( z)-\mathbf1_{\R^+}\big(\tau-z^2\big).
\end{split}\end{equation}
The approximate solution of the linearized Prandtl equation is defined as 
\begin{equation}\label{ap_lin}
(u_\ep,v_\ep)(t,x,y)=e^{i\ep^{-1}x}\big(U_\ep,V_\ep\big)(t,y)
\end{equation}
with
\begin{equation}\label{expre_uv}
U_\ep(t,y)~=~ie^{i\ep^{-1}\int_0^t \omega(\epsilon, s)ds}\partial_y W_\epsilon(t,y),\quad
V_\ep(t,y)~=~\epsilon^{-1}e^{i\ep^{-1}\int_0^t \omega(\epsilon,s)ds} W_\epsilon(t,y),
\end{equation}
where
\begin{equation}\label{app_lam}
\omega(\epsilon, t)~:=~-u_s\big(t,a(t)\big)+\sqrt{\epsilon}\Big|\frac{\partial_y^2u_s\big(t,a(t)\big)}{2}\Big|^{\frac{1}{2}}~\tau,
\end{equation}
and
\begin{align}\label{def-W}
\begin{aligned}
	W_\epsilon(t,y)=~&H\big(y-a(t)\big)\Big[u_s(t,y)-u_s\big(t,a(t)\big)+\sqrt{\epsilon}\Big|\frac{\partial_y^2u_s\big(t,a(t)\big)}{2}\Big|^{\frac{1}{2}} \tau\Big]\\
	&+\sqrt{\epsilon}\varphi\big(y-a(t)\big)\Big|\frac{\partial_y^2u_s\big(t,a(t)\big)}{2}\Big|^{\frac{1}{2}} ~V\Big(\Big|\frac{\partial_y^2u_s\big(t,a(t)\big)}{2}\Big|^{\frac{1}{4}}\cdot\frac{y-a(t)}{\ep^{\frac{1}{4}}}\Big)\\
	= 
	&v_\epsilon^{reg}(t,y)+v_\epsilon^{sl}(t,y).
\end{aligned}
	\end{align}
Here $H(\cdot)$ is the Heaviside function and $\varphi(\cdot)$ is a smooth truncation function near 0.
	 To make the function $(u_\ep,v_\ep)(t,x,y)$ in \eqref{ap_lin} to be $2\pi-$periodic in $x$, we take $\ep=\frac{1}{n}$ with  $n\in \mathbb{N}$. It is straightforward to check that, 
\[(u_\ep,v_\ep)|_{y=0}=0,\qquad\lim\limits_{y\to+\infty} u_\ep=0,\]
 and the divergence-free condition also holds for $(u_\epsilon, v_\epsilon)$. And $u_\ep(t,x,y)=e^{i\ep^{-1}x}U_\ep(t,y)$ is analytic in the tangential variable $x$ and $W^{2,\infty}$ in $y$. Moreover, $(U_\epsilon,V_\epsilon)$ has the growing mode like $e^{-\Im\tau\cdot\frac{t}{\sqrt{\epsilon}} }$, i.e., there are positive constants $C_0$ and $\sigma_0$, independent of $\ep$, such that
\begin{equation}\label{bound_app}
C_0^{-1}e^{\frac{\sigma_0t}{\sqrt{\ep}}}~\leq~
\|U_\ep(t,\cdot)\|_{W_\alpha^{2,\infty}}
~\leq~C_0e^{\frac{\sigma_0t}{\sqrt{\ep}}},\quad t\in[0,t_0),
\end{equation}
which is the key of instability mechanism.

Now we construct the approximate solution of problem \eqref{linear_pr} by choosing $(u_\epsilon, v_\epsilon, b_{1,\epsilon}, b_{2,\epsilon})(t,x,y)$ in \eqref{linear_pr} to be the following form,
\begin{align}\label{def-app}
	\begin{aligned}
		(u_\epsilon, v_\epsilon, b_{1,\epsilon}, b_{2,\epsilon})(t,x,y)=e^{i\epsilon^{-1}x}\left(U_\epsilon, V_\epsilon, B_{1,\epsilon}, B_{2,\epsilon}\right)(t,y).
	\end{aligned}
\end{align}
 On one hand, $(U_\epsilon, V_\epsilon)(t, y)$ is taken to be the same form as given by \eqref{expre_uv}-\eqref{def-W} to preserve the instability mechanism; on the other hand, $(B_{1,\epsilon}, B_{2,\epsilon})(t,x,y)$ are taken as the following,
\begin{align}\label{def-HG}
B_{1,\epsilon}(t,y)=ie^{i\epsilon^{-1}\int_0^t\omega(\epsilon,s)ds}\partial_y\Phi_\epsilon(t,y),\qquad B_{2,\epsilon}(t,y)=\epsilon^{-1} e^{i\epsilon^{-1}\int_0^t\omega(\epsilon,s)ds}\Phi_\epsilon(t,y)
\end{align}
with
\begin{align}\label{def-Phi}
\displaystyle\Phi_\epsilon(t,y)=\frac{b_s(y)W_\epsilon(t,y)}{\omega(\epsilon,t)+u_s(t,y)}=b_s(y)\cdot\frac{v_\epsilon^{reg}(t,y)+v_\epsilon^{sl}(t,y) }{\omega(\epsilon,t)+u_s(t,y)},
\end{align}
where $\omega(\epsilon,t)$ and $W_\epsilon(t,y)$ are given in \eqref{app_lam} and \eqref{def-W} respectively.  It is straightforward to show that 
\[(u_\epsilon,v_\epsilon, b_{2,\epsilon})|_{y=0}=0,\qquad\lim\limits_{y\to+\infty} (u_\epsilon, b_{1,\epsilon})=\mathbf0,\]
 and the divergence-free conditions are satisfied. Also, $(u_\epsilon, b_{1,\epsilon})(t,x,y)=e^{i\ep^{-1}x}\big(U_\epsilon, B_{1,\epsilon})(t,y)$ is analytic in the tangential variable $x$ and is in $W^{2,\infty}$ in $y$. Furthermore,  $\big(U_\epsilon, V_\epsilon, B_{1,\epsilon}, B_{2,\epsilon}\big)(t,y)$ still preserves the growing mode in $t$, i.e., there are positive constants $C_0$ and $\sigma_0$, independent of $\ep$, such that
\begin{equation}\label{bound_app}
C_0^{-1}e^{\frac{\sigma_0t}{\sqrt{\ep}}}~\leq~
\big\|\big(U_\epsilon, B_{1,\epsilon})(t,\cdot)\big\|_{W_\alpha^{2,\infty}}
~\leq~C_0e^{\frac{\sigma_0t}{\sqrt{\ep}}},\quad t\in[0,t_0).
\end{equation}

Substituting the approximate solution  \eqref{def-app} into the problem \eqref{linear_pr}, it follows that
\begin{equation}\label{lin_ap}\begin{cases}
\partial_t u_\ep+u_s\partial_x u_\ep+v_\ep \partial_y u_s-\partial_y^2u_\ep-b_s\partial_x b_{1,\epsilon}-b_{2,\epsilon}  b'_s =r_\ep^1,\quad&\\
\partial_t b_{1,\epsilon}+u_s\partial_x b_{1,\epsilon}+v_\ep b'_s-b_s\partial_x u_\epsilon-b_{2,\epsilon} \partial_y u_s =r_\ep^2,\quad&\\
\partial_x u_\epsilon+\partial_y v_\epsilon=0,\quad\partial_x b_{1,\epsilon}+\partial_y b_{2,\epsilon}=0,&{\rm} \\ 
(u_\epsilon,v_\epsilon, b_{2,\epsilon})|_{y=0}=0,
\end{cases}\end{equation}
in $\Omega$.
The remainder term $(r_\epsilon^1,r_\epsilon^2)$ can be represented by
$$(r_\epsilon^1,r_\epsilon^2)(t,x,y)=e^{i\ep^{-1}x}\big(R_\epsilon^1, R_\epsilon^2)(t,y),$$
where
\begin{equation}\label{tr}\begin{split}
R_\epsilon^1(t,y)=e^{i\ep^{-1}\int_0^t w(\epsilon,s)ds}\Big\{&-\ep^{-1}\Big[u_s(t,y)
-u_s(t,a(t))-\partial_y^2u_s(t,a(t))\frac{(y-a(t))^2}{2}\Big]\partial_yv_\ep^{sl}(t,y)\\
&+\ep^{-1}\Big[\partial_y u_s(t,y)-\partial_y^2u_{s}\big(t,a(t)\big)\cdot\big(y-a(t)\big)\Big]v_\ep^{sl}(t,y)\\
&+\epsilon^{-1}b_s^2(y)\left[\frac{\partial_y v_\epsilon^{sl}(t,y)}{\omega(\epsilon,t)+u_s(t,y)}-\frac{\partial_y u_s(t,y)}{\big(\omega(\epsilon,t)+u_s(t,y)\big)^2}v_\epsilon^{sl}(t,y)\right] \\
&+i\partial_t\partial_y v_\ep^{sl}(t,y)+O(\ep^\infty e^{\frac{\sigma_0t}{\sqrt{\epsilon}}} )\Big\},
\end{split}\end{equation}
and
\begin{equation}\label{br}\begin{split}
R_\epsilon^2(t,y)&=ie^{i\ep^{-1}\int_0^t w(\epsilon,s)ds}\partial_{ty}^2\Phi(t,y)\\
&=ie^{i\ep^{-1}\int_0^t w(\epsilon,s)ds}\partial_{t}\left\{b_s(y)\left[\frac{\partial_y v_\epsilon^{sl}(t,y)}{\omega(\epsilon,t)+u_s(t,y)}-\frac{\partial_y u_s(t,y)}{\big(\omega(\epsilon,t)+u_s(t,y)\big)^2}v_\epsilon^{sl}(t,y)\right]\right.\\
&\hspace{3.5cm}\left.+b'_s(y)\cdot\frac{v_\epsilon^{reg}(t,y)+v_\epsilon^{sl}(t,y) }{\omega(\epsilon,t)+u_s(t,y)}\right\}.\\
\end{split}\end{equation}
Here, the term $O(\ep^\infty)$ in \eqref{tr} denotes the part of remainder
with exponential decay in $y$   from the fact that $V(z)$ decays exponentially and the derivatives of $\varphi(\cdot-a(t))$ vanish
outside a neighborhood of  $a(t)$.

Let $\displaystyle z=\frac{y-a(t)}{\epsilon^{1/4}}.$ Note that in the vicinity of $\{y=a(t)\},$
\begin{align*}
	\omega(\epsilon, t)+u_s(t,y)=u_s(t, y)-u_s\big(t,a(t)\big)+\sqrt{\epsilon}\Big|\frac{\partial_y^2u_s\big(t,a(t)\big)}{2}\Big|^{\frac{1}{2}}~\tau=O(\sqrt{\epsilon})(1+z^2),
\end{align*}
and
\begin{align*}
	\begin{aligned}
		&b_s(y)=O(1)(y-a)^2=O(1)[y-a(t)+a(t)-a]^2=O(1)(t+\epsilon^{1/4}z )^2,\\
		&b'_s(y)=O(1)(y-a)=O(1)[y-a(t)+a(t)-a]=O(1)(t+\epsilon^{1/4}z ).
	\end{aligned}
\end{align*}
Therefore, Based on the exponential decay properties of $v_\ep^{sl}(t,y)$ and the formulations \eqref{tr} and \eqref{br} of $R_\epsilon^1(t,y)$ and $R_\epsilon^2(t,y)$, we have
\begin{equation}\label{bd_tr}
\big\|\big( R_\epsilon^1, R_\epsilon^2\big) (t,\cdot)\big\|_{W_\alpha^{0,\infty}}\leq C_1 e^{\frac{\sigma_0t}{\sqrt{\ep}}}(\epsilon^{-1/4}+\epsilon^{-5/4}t^4 ),
\quad\forall\alpha\geq0
\end{equation}
with the same constant $\sigma_0>0$ given in \eqref{bound_app}.


\begin{proof}[\bf Proof of  Theorem \ref{thm_lin}]
With the approximate solutions constructed   above, we can
apply the approach  in \cite{G-D} to prove  Theorem \ref{thm_lin}.
The proof relies on the verification of \eqref{est_in} for the tangential differential operator through contradiction.

Suppose that \eqref{est_in} does not hold, that is, for all $\sigma>0$, there exists $\delta>0,~\alpha_0,m\geq0$ and $\mu\in[0,\frac{1}{4})$ such that
\begin{equation}\label{pr_x}
\sup\limits_{0\leq s< t\leq \delta}\|e^{-\sigma(t-s)\sqrt{|\partial_x|}}\mathcal{T}(t,x)\|_{\cl(\h_{\alpha}^m,\h_\alpha^{m-\mu})}<+\infty.
\end{equation}
Introduce the operator
\begin{equation*}
\mathcal{T}_\ep(t,s):~W_{\alpha}^{0,\infty}(\R^+)\mapsto W_{\alpha}^{0,\infty}(\R^+)
\end{equation*}
where
\begin{equation}\label{def_t}
 \mathcal{T}_\ep(t,s)(U_0, B_0)~:=~e^{-i\ep^{-1}x}\mathcal{T}(t,s)\Big(e^{i\ep^{-1}x}(U_0,B_0)\Big)
\end{equation}
with $\mathcal{T}(t,s)$ being defined in \eqref{def_T}.
From \eqref{pr_x}, we have
\begin{equation}\label{est_tep}
\|\mathcal{T}_\ep(t,s)\|_{\cl(W_{\alpha}^{0,\infty},W_\alpha^{0,\infty})}~\leq~C_2\ep^{-\mu}e^{\frac{\sigma(t-s)}{\sqrt{\ep}}},\quad \forall ~0\leq s< t\leq\delta
\end{equation}
for some constant $C_2>0$ independent of $\ep$.

Next, we introduce the operator
\begin{align*}
	L(u(t,x,y), b_1(t,x,y))=\Big(& u_s\partial_x u+v\partial_y u_s -\partial_y^2 u-b_s\partial_x h-b_2b'_s,\\
	&u_s\partial_x b_1+vb'_s -b_s\partial_x u-b_2\partial_y u_s\Big)
\end{align*}
with
\begin{align*}
v(t,x,y)=	-\int_0^y\partial_x u(t,x,\tilde y)d\tilde y,\quad b_2(t,x,y)=-\int_0^y\partial_x b_1(t,x,\tilde y)d\tilde y.
\end{align*}
Denote
\[L_\ep~:=~e^{-i\ep^{-1}x}~L~e^{i\ep^{-1}x},\]
and let $(U,B_1)(t,y)$ be a solution to the problem
\[
\partial_t (U, B_1)+L_\ep (U,B_1)~=~\mathbf0,\qquad
(U,B_1)|_{t=0}~=~(U_\epsilon, B_{1,\epsilon})(0,y),
\]
where $U_\epsilon$ and  $B_{1,\epsilon}(t,y)$ are given in \eqref{expre_uv} and \eqref{def-HG} respectively.
Then, we have
\[(U,B_1)(t,y)~=~\mathcal{T}_\ep(t,0)(U_\epsilon, B_{1,\epsilon})(0,y),\]
and by using \eqref{bound_app} and \eqref{est_tep}, it follows that
\begin{equation}\label{up_bound}
\|(U,B_1)(t,\cdot)\|_{W_\alpha^{0,\infty}}\leq C_2\ep^{-\mu}e^{\frac{\sigma t}{\sqrt{\ep}}}\|(U_\epsilon, B_{1,\epsilon})(0,\cdot)\|_{W_{\alpha}^{0,\infty}}
\leq C_3\ep^{-\mu}e^{\frac{\sigma t}{\sqrt{\ep}}},\quad \forall t\in(0,\delta]
\end{equation}
 for some constant $C_3>0$ independent of $\ep$.

On the other hand, the difference $(\tilde U, \tilde B_1):=(U-U_\ep, B_1-B_{1,\epsilon})$ can be obtained by the Duhamel principle:
\begin{equation}\label{eq_dif}
(\tilde U, \tilde B_1)(t,\cdot)~=~\int_0^t\mathcal{T}_\ep(t,s) (R_\epsilon^1, R_\epsilon^2)(s,\cdot)ds,\quad \forall~t\leq\delta.
\end{equation}
Combining \eqref{bd_tr}, \eqref{est_tep} and \eqref{eq_dif},
and choosing $\sigma<\sigma_0$, we have
\begin{align}\label{est_dif}
\begin{aligned}
\|(\tilde U, \tilde B_1)(t,\cdot)\|_{W^{0,\infty}_0}&\leq C_1C_2\ep^{-\mu}
\int_0^te^{\frac{\sigma(t-s)}{\sqrt{\ep}}}e^{\frac{\sigma_0s}{\sqrt{\ep}}}(\epsilon^{-1/4}+\epsilon^{-5/4}s^4 )ds
\\
&\lesssim \ep^{\frac{1}{4}-\mu}e^{\frac{\sigma_0t}{\sqrt{\ep}}}+\epsilon^{-\mu+\frac{5}{4}}e^{\frac{\sigma_0t}{\sqrt{\ep}}}\big(1+\frac{t^4}{\epsilon^2})\\
&\leq C_4 \epsilon^{\frac{1}{4}-\mu}e^{\frac{\sigma_0t}{\sqrt{\ep}}}\big(1+\frac{t^4}{\epsilon}\big),
\end{aligned}\end{align}
where the constant $C_4>0$ is independent of $\ep$. Then, by using \eqref{est_dif} and \eqref{bound_app},
we obtain that for sufficiently small $\ep$,
\begin{equation}\label{low_bound}\begin{split}
\|(U,B_1)(t,\cdot)\|_{W_\alpha^{0,\infty}}&\geq\|(U_\epsilon,B_{1,\epsilon})(t,\cdot)\|_{W_\alpha^{0,\infty}}
-\|(\tilde U,\tilde B_1)(t,\cdot)\|_{W_\alpha^{0,\infty}}\\
&\geq C_0^{-1}e^{\frac{\sigma_0t}{\sqrt{\ep}}}-C_4 \epsilon^{\frac{1}{4}-\mu}e^{\frac{\sigma_0t}{\sqrt{\ep}}}\big(1+\frac{t^4}{\epsilon}\big)\\
&\geq C_5e^{\frac{\sigma_0t}{\sqrt{\ep}}},
\end{split}\end{equation}
provided that $t\ll\epsilon^{1/4}$.
As $\sigma<\sigma_0,$ comparing \eqref{up_bound} with \eqref{low_bound},
the contradiction arises when $\frac{\mu}{\sigma_0-\sigma}|\ln \epsilon|\sqrt{\epsilon}\ll t\ll \epsilon^{1/4}$
with sufficiently small $\epsilon$. Thus, the proof of Theorem \ref{thm_lin} is completed.
\end{proof}

\begin{rem}
	Through the detailed calculation of the error terms $(r_\epsilon^1, r_\epsilon^2 )$ given by \eqref{lin_ap}, we know that the diffusion term $\partial_y^2 u_\epsilon$ generates the term $O(\epsilon^{-1/4} )$ in $R_\epsilon^1(t,y)$.  Thus, if we consider the MHD boundary layer problem with magnetic diffusion and the corresponding background magnetic field depends on the time variable, by applying a similar approach as above we can obtain the same result as Theorem \ref{thm_lin}.
\end{rem}

\bigskip

\appendix
\section{Derivation of Boundary Layer System} \label{A1}

In this section, we will give a formal derivation of the boundary layer equations \eqref{BLE}. When the magnetic Reynolds number is much larger than the hydrodynamics Reynolds number, the magnetic diffusion term can be ignored in the derivation. That is, we consider the following two dimensional incompressible MHD equations with a small viscosity coefficient in a domain with a flat boundary:
\begin{align}\label{MHD}
\left\{
\begin{array}{ll}
\partial_t\mathbf{u}^\varepsilon+(\mathbf u^\varepsilon\cdot\nabla)\mathbf u^\varepsilon+\nabla p^\varepsilon-(\mathbf b^\varepsilon\cdot\nabla) \mathbf b^\varepsilon=\varepsilon \triangle\mathbf u^\varepsilon,\\
\partial_t\mathbf b^\varepsilon+(\mathbf u^\varepsilon\cdot\nabla)\mathbf b^\varepsilon=(\mathbf b^\varepsilon\cdot\nabla) \mathbf u^\varepsilon,\\
\hbox{div}\mathbf u^\varepsilon=0,\quad \hbox{div}\mathbf b^{\varepsilon}=0,
\end{array}
\right.
\end{align}
where $t>0,~ {\bf x}=(x,y)\in\mathbb{T}\times\mathbb{R}_+$, $\mathbf u^\varepsilon=(u^\varepsilon_1, u^\varepsilon_2)$ the velocity, $\mathbf b^\varepsilon=(b^\varepsilon_1, b^\varepsilon_2)$ the magnetic field, $p^\varepsilon$ the total pressure: $$
	p^\varepsilon=p_f^\varepsilon+\frac{\left|\mathbf{b}^\varepsilon\right|^2}{2},$$ where $p_f^\varepsilon$ is the fluid pressure. The small parameter $\varepsilon>0$ denotes the viscosity coefficient.

The initial data for (\ref{MHD}) is given by
\begin{align}
\label{ID}
\mathbf u^\varepsilon(0,x,y)=\mathbf u_0(x,y),\qquad \mathbf b^\varepsilon(0,x,y)=\mathbf b_0(x,y).
\end{align}
The  no-slip boundary condition is imposed on velocity:
\begin{align}\label{bd-MHD}
\mathbf{u}^\varepsilon|_{y=0}=\mathbf0. 
\end{align}
Since  the no-slip boundary condition (\ref{bd-MHD}) is given for the velocity, there is no need to impose any boundary condition on magnetic field $\mathbf b^\varepsilon$, at least for the classical solutions. Indeed, the restriction of equations \eqref{MHD} on the boundary and using the boundary condition \eqref{bd-MHD} imply
\begin{align}\label{BC_b2}
	b_2^\varepsilon(t,x,y)|_{y=0}~\equiv~b_2^\varepsilon(0,x,y)|_{y=0}=b_{02}(x,0).
\end{align}

When the hydrodynamic Reynolds number tends to infinity, which corresponds to the viscocity coefficient $\varepsilon$ tending to zero, the related limited system of  (\ref{MHD}) is the ideal  incompressible MHD system:
\begin{align}
\label{IIV}
\left\{
\begin{array}{ll}
\partial_t\mathbf u^0+(\mathbf u^0\cdot\nabla)\mathbf u^0+\nabla p^0=(\mathbf b^0\cdot\nabla)\mathbf b^0,\\
\partial_t\mathbf b^0+(\mathbf u^0\cdot\nabla)\mathbf b^0=(\mathbf b^0\cdot\nabla)\mathbf u^0,\\
\hbox{div}\mathbf u^0=0,\quad \hbox{div}\mathbf b^{0}=0.
\end{array}
\right.
\end{align}
To avoid the initial layer in the study of the vanishing viscosity limit process for (\ref{MHD}), the initial data for (\ref{IIV}) is taken to be  the same as the initial data of viscous flow (\ref{ID}), \textit{i.e.},
\begin{align}
\label{IDI}
\mathbf u^0(0,x,y)=\mathbf u_0(x,y),\qquad \mathbf b^0(0,x,y)=\mathbf b_0(x,y).
\end{align}
For the well-posedness of  the ideal  incompressible MHD equations (\ref{IIV}), we may impose the following boundary condition for the normal components of velocity and magnetic field:
\begin{align}
\label{1.2I}
u^0_2|_{y=0}=0, \qquad b^0_2|_{y=0}=0.
\end{align}
Hence, it is necessary that the initial data $b_{02}(x,y)$ satisfies the compatibility condition
\begin{align*}
b_{02}(x,0)=0,
\end{align*}
which, together with \eqref{BC_b2}, imply
\begin{align}
\label{BCBB}
b^\varepsilon_{2}(t,x,0)=0.
\end{align}
Consequently,  we impose the boundary conditions  (\ref{bd-MHD}) and (\ref{BCBB}) for (\ref{MHD}).

\begin{rem}
	We can use the process in Section 2 to reformulate the viscous MHD equations \eqref{MHD}. Precisely,  consider the stream function $\psi^\varepsilon$ of the magnetic field $\mathbf{b}^\varepsilon$:
\begin{align*}
\partial_{y}\psi^\varepsilon =b_1^\varepsilon ,\qquad-\partial_x\psi^\varepsilon =b_2^\varepsilon,\qquad \psi^\varepsilon|_{y=0}=0.
\end{align*}
Then, by using $\psi^\varepsilon$ we can rewrite the equations \eqref{MHD} as 
\begin{align*}
	\begin{cases}
		\partial_t \psi^\varepsilon+(\mathbf{u}^\varepsilon\cdot\nabla)\psi^\varepsilon=0,\\
		\partial_t \mathbf{u}^\varepsilon+(\mathbf{u}^\varepsilon\cdot\nabla)\mathbf{u}^\varepsilon+\nabla p^\varepsilon_f+\triangle \psi^\varepsilon\nabla\psi^\varepsilon =\varepsilon\triangle \mathbf{u}^\varepsilon,
	\end{cases}
\end{align*}
which is exactly the two-dimensional incompressible Navier-Stokes-Korteweg equations. Therefore in the two-dimensional case the effect of magnetic field on the fluid can be regarded in some sense as some kind of capillarity; see also \cite{L-X-Z, R-W-X-Z}.
\end{rem}

To study the vanishing viscosity limit for (\ref{MHD})-(\ref{bd-MHD}) and (\ref{BCBB}), the asymptotic boundary layer expansion is an effective tool, which was proposed by L. Prandtl in his pioneering work \cite{P}. Moreover,  the well-posedness theory and the properties of solutions to the leading order nonlinear boundary layer equations play a key role in studying the vanishing viscosity limit.
To derive the related MHD boundary layer equations, we follow the boundary layer expansion in \cite{P} to write
\begin{align}
u^\varepsilon_1(t,x,y)&=u^0_1(t,x,y)+u^b_1(t,x,\tilde{y})+o(1),\label{ANSATZ1}\\
u^\varepsilon_2(t,x,y)&=u^0_2(t,x,y)+\sqrt{\varepsilon}\big(u_2^1(t,x,y)+u^b_2(t,x,\tilde{y})\big)+o(\sqrt{\varepsilon}),\label{ANSATZ2}\\
b^\varepsilon_1(t,x,y)&=b^0_1(t,x,y)+b^b_1(t,x,\tilde{y})+o(1),\label{ANSATZ3}\\
b^\varepsilon_2(t,x,y)&=b^0_2(t,x,y)+\sqrt{\varepsilon}\big(b_2^1(t,x,y)+b^b_2(t,x,\tilde{y})\big)+o(\sqrt{\varepsilon}),\label{ANSATZ4}\\
p^\varepsilon(t,x,y)&=p^0(t,x,y)+p^b(t,x,\tilde{y})+o(1),\label{ANSATZ7}
\end{align}
where the fast variable $\tilde y=\varepsilon^{-1/2}y$. 

Substituting the ansatz (\ref{ANSATZ1})-(\ref{ANSATZ7}) into (\ref{MHD}) and comparing terms  in each equation according to the order of $\varepsilon$,
$(u^0_1,u^0_2,b^0_1,b^0_2,p^0)(t,x,y)$ satisfies (\ref{IIV});  $(u_2^1, b_2^1)(t,x,y)$ represents
the next order of inner flow that  can be obtained by solving a system of linear ideal MHD equations; and $(u^b_1, \sqrt{\varepsilon}u^b_2, b^b_1, \sqrt{\varepsilon}b^b_2,p^b)$ represents
 the boundary layer that decays  to zero as $\tilde{y}$ tends to $+\infty$. Set
\begin{align*}
\left\{
\begin{array}{ll}
u(t,x, \tilde{y})=u^0_1(t,x,0)+u^b_1(t,x, \tilde{y}),\\
v(t,x, \tilde{y})=u_2^1(t,x,0)+u^b_2(t,x, \tilde{y})+\tilde{y}\partial_y u^0_2(t,x,0),\\
b_1(t,x, \tilde y)=b^0_1(t,x,0)+b^b_1(t,x, \tilde{y}),\\
b_2(t,x, \tilde y)=b_2^1(t,x,0)+b^b_2(t,x, \tilde{y})+\tilde{y}\partial_y b^0_2(t,x,0),\\
 p(t,x,\tilde{y})=p^0(t,x,0)+p^b(t,x, \tilde{y}),
\end{array}
\right.
\end{align*}
which correspond to  the leading order terms in  the expansion \eqref{ANSATZ1}-\eqref{ANSATZ7} with respect to $\varepsilon$.
 Here $(u,v,b_1,b_2,p)(t,x,\tilde y)$ satisfies
\begin{align}
\label{BL}
\left\{
\begin{array}{ll}
\partial_tu+u\partial_xu+v\partial_{\tilde y}u+\partial_xp=\partial_{\tilde y}^2u+b_1\partial_xb_1+b_2\partial_{\tilde y}b_1,\\
\partial_{\tilde y}p=0,\\
\partial_tb_1+u\partial_xb_1+v\partial_{\tilde y}b_1=b_1\partial_xu+b_2\partial_{\tilde y}u,\\
\partial_tb_2+u\partial_xb_2+v\partial_{\tilde y}b_2=b_1\partial_xv+b_2\partial_{\tilde y}v,\\
\partial_xu+\partial_{\tilde y}v=0,\quad \partial_xb_1+\partial_{\tilde y}b_2=0.
\end{array}
\right.
\end{align}
According to (\ref{ID}), (\ref{IDI}), (\ref{ANSATZ1}) and (\ref{ANSATZ3}), the initial data of (\ref{BL}) are given by
\begin{align}
\label{IVBLI}
u(0,x,\tilde{y})=0,\qquad b_1(0,x,\tilde{y})=b^0_{1}(0,x,0)=b_{01}(x,0),
\end{align}
where \eqref{IDI} and  the compatibility condition  $u_{01}(x,0)=0$ are used.

The boundary conditions and the far-field conditions are written as follows,
\begin{align}
\label{BBC}
u|_{\tilde y=0}=v|_{\tilde y=0}=b_2|_{\tilde y=0}=0,
\end{align}
and
\begin{align}
\label{FC}
\lim_{\tilde y\rightarrow\infty}u(t,x,\tilde y)=u_1^0(t,x,0):= 
U(t,x),\quad \lim_{\tilde y\rightarrow\infty}b_1(t,x,\tilde y)=b_1^0(t,x,0) :=  
B(t,x).
\end{align}

\begin{rem}
The boundary layer problem \eqref{BL}-\eqref{FC} also can be derived by using the scaling method proposed in \cite{O-S}. Indeed, one can choose the following scale transform:
\begin{align*}
 t=t,\quad x=x,\quad \tilde y=\varepsilon^{-\frac{1}{2}}y,
 \end{align*}
and
\begin{align*}
\left\{
\begin{array}{ll}
u(t,x,\tilde y)=u_1^\varepsilon(t,x, y),\\
v(t,x,\tilde y)=\varepsilon^{-\frac{1}{2}}u^\varepsilon_2(t,x, y),
\end{array}
\right.
\qquad p(t,x,\tilde y)=p^\varepsilon(t,x,y),
\end{align*}
\begin{align*}
\left\{
\begin{array}{ll}
b_1(t,x,\tilde y)=b_1^\varepsilon(t,x, y),\\
b_2(t,x,\tilde y)=\varepsilon^{-\frac{1}{2}}b_2^\varepsilon(t,x, y).
\end{array}
\right.
\end{align*}
\end{rem}

To simplify the system \eqref{BL},  note that  the second equation in $\eqref{BL}$ shows that there is no strong boundary layer for pressure, that is, $p^b(t,x,\tilde y)\equiv0$. As a consequence,
$$p(t,x,\tilde y)\equiv p^0(t,x,0):= 
(t,x).$$
In addition, the fourth equation in $(\ref{BL})$ is a direct consequence of the third and fifth equations in $(\ref{BL})$ and the boundary conditions $(\ref{BBC})$, at least it holds true for  smooth solutions. Therefore,
the system \eqref{BL} is reduced to
\begin{align}
\label{BLLL}
\left\{
\begin{array}{ll}
\partial_tu+u\partial_xu+v\partial_{\tilde y}u-\partial_{\tilde y}^2u-b_1\partial_xb_1-b_2\partial_{\tilde y}b_1=-\partial_x p ,\\
\partial_t b_1+u\partial_x b_1+v\partial_{\tilde y}b_1-b_1\partial_xu-b_2\partial_{\tilde y}u=0,\\
\partial_xu+\partial_{\tilde y}v=0,\quad \partial_xb_1+\partial_{\tilde y}b_2=0,\\
(u,v,b_2)|_{\tilde y=0}=0,\qquad \lim\limits_{\tilde y\to+\infty}(u,b_1)(t,x,\tilde y)=(U,B)(t,x),\\
(u,b_1)|_{t=0}=(u_0,b_0)(x,y).
\end{array}
\right.
\end{align}
Moreover,  the known functions $U, B$ and $p$ satisfy  Bernoulli's laws
\begin{align}
\label{MC1}
\left\{
\begin{array}{ll}
\partial_t U+U\partial_x U+\partial_x p=B\partial_x B,
\\
\partial_t B+U\partial_x B=B\partial_x U.
\end{array}
\right.
\end{align}

\bigskip

\section{Proof of Lemma \ref{MTV}}
Finally,  we give the proof of Lemma \ref{MTV}. 
\begin{proof}[Proof of Lemma \ref{MTV}]
We only prove the inequality \eqref{Moser} since the proof of \eqref{Moser1} is similar. The proof is divided into two cases.

{Case 1: $\alpha=\bf0$ or $\beta=\bf0.$ } Without loss of generality, we assume $\alpha=\bf0$. By the Sobolev embedding  and $|\beta|\leq m$,
\begin{align*}
	\|( u\cdot\partial^\beta v)(t,\cdot)\|_{L^2(\Omega)}\leq &\|u(t,\cdot)\|_{L^\infty(\Omega)}\|\partial^\beta v(t,\cdot)\|_{L^2(\Omega)}\\
	\lesssim &\|u(t)\|_{\mathcal{H}^2}\|v(t)\|_{\mathcal{H}^m},
\end{align*}
which implies \eqref{Moser} for $m\geq2$.

{Case 2: $|\alpha|\geq1$ and $|\beta|\geq1.$ }
	 From $|\alpha|+|\beta|\leq m$,  there exist $|\alpha|\leq m-1$ and $|\beta|\leq m-1$, so that the Sobolev embedding gives
	\begin{align*}
		\big\|\big(\partial^\alpha u\cdot \partial^{\beta}v\big)(t,\cdot)\big\|_{L^2(\Omega)}\leq&\big\|\partial^\alpha u(t,\cdot)\big\|_{L^4(\Omega)}\cdot \big\|\partial^{\beta}v(t,\cdot)\big\|_{L^4(\Omega)}\\
		\lesssim& \big\|\partial^\alpha u (t,\cdot)\big\|_{H^1(\Omega)}\|\partial^\alpha v(t)\|_{H^{1}(\Omega)}\\
		\lesssim &\big\|u(t)\big\|_{\mathcal{H}^{|\alpha|+1}}\|v(t)\|_{\mathcal{H}^{|\beta|+1}},
	\end{align*}
 which implies \eqref{Moser}. Hence,  we complete the proof of Lemma \ref{MTV}.

\end{proof}

\bigskip

 \section*{Acknowledgement}
The research of C.-J. Liu was sponsored by National Natural Science Foundation of China (Grant No. 11743009, 11801364), Shanghai Sailing Program (Grant No. 18YF1411700) and Shanghai Jiao Tong University (Grant No. WF220441906).
The research  of D. Wang was  partially supported by the
National Science Foundation under grants   DMS-1613213 and DMS-1907519.
The research of F. Xie was partially supported by National Natural Science Foundation of China (Grant No.11571231, 11831003) and the China Scholarship Council.
The research of T. Yang was partially supported by the General Research Fund of Hong Kong, CityU No.11302518.

\bigskip

\bibliographystyle{plain}

\end{document}